\documentclass[letterpaper, 11pt]{article}
\usepackage{amsmath,amssymb, amsthm, dsfont}

\usepackage[left=1in,top=1in,right=1in,bottom=1in]{geometry}

\newtheorem{thm}{Theorem}[section]
\newtheorem{lemma}[thm]{Lemma}
\newtheorem*{PM*}{Poincar\'e-Miranda Theorem}
\newtheorem{prop}[thm]{Proposition}

\newtheorem{cor}[thm]{Corollary}

\newtheorem{?}[thm]{Question}
\newtheorem{alg}[thm]{Algorithm}

\theoremstyle{definition}
 \newtheorem{definition}[thm]{Definition}
  \newtheorem{example}[thm]{Example}
 \newtheorem{remark}[thm]{Remark}

\DeclareMathOperator{\In}{in}
\DeclareMathOperator{\argmin}{argmin}
\DeclareMathOperator{\argmax}{argmax}
\DeclareMathOperator{\Trop}{Trop}

\DeclareMathOperator{\LL}{\mathcal{L}}
\DeclareMathOperator{\QM}{QM}

\DeclareMathOperator{\Reg}{Reg}

\newcommand{\R}{\mathbb{R}}

\newcommand{\N}{\mathbb{N}}
\newcommand{\Z}{\mathbb{Z}}
\newcommand{\proj}{\mathbb{P}}
\newcommand{\C}{\mathbb{C}}
\newcommand{\V}{\mathcal{V}}
\newcommand{\A}{\mathcal{A}}
\newcommand{\f}{\overline{f}}
\newcommand{\I}{\overline{I}}

\title{Real radical initial ideals}
\author{Cynthia Vinzant \footnote{University of California, Berkeley, Department of Mathematics, cvinzant@math.berkeley.edu}}

\date{\today}
\begin{document}

\maketitle
\begin{abstract}
We explore the consequences of an ideal $I \subset \R[x_1, \hdots, x_n]$ having a real radical initial ideal, both for the geometry of the real variety of $I$ and as an application to sums of squares representations of polynomials. We show that if $\In_w(I)$ is real radical for a vector $w$ in the tropical variety, then $w$ is in the logarithmic set of the real variety. We also give algebraic sufficient conditions for $w$ to be in the logarithmic limit set of a more general semialgebraic set. If in addition $w \in (\R_{>0})^n$, then the corresponding quadratic module is stable. In particular, if $\In_w(I)$ is real radical for some $w \in (\R_{>0})^n$ then $\sum \R[x_1, \hdots, x_n]^2 +I$ is stable.  This provides a method for checking the conditions for stability given by Powers and Scheiderer \cite{SP}. \end{abstract}

\section{Introduction} \label{sec:intro}
Initial ideals can be seen as degenerations that retain useful information of the original ideal but often have simpler structure. The theory of Gr\"obner bases and much of computational algebraic geometry take advantage of this retention.  Tropical geometry uses initial ideals and degenerations of algebraic varieties to understand the combinatorial structure of an ideal (see \cite{trop book}). 
Here we will explore a property of initial ideals relevant to the real variety of an ideal, namely that an initial ideal is real radical. 

Because $\R$ is not algebraically closed, Gr\"obner basis techniques are not sufficient to algebraically characterize a real variety. This leads to the theory of sums of squares of polynomials. Many computations involving sums of squares can now be performed numerically with semidefinite programming, which is effective and deeply related to real algebraic geometry. For an introduction to sums of squares of polynomials and these connections, see \cite{PPSS}.

This paper has two main theorems, both stating consequences of an initial ideal being real radical. In Section \ref{tropical}, we introduce some useful constructions from tropical geometry and prove our first main result, which relates real radical initial ideals to the logarithmic limit set of real varieties and other semialgebraic sets. In particular, we show that a nonsingular point of $\V_{\R^*}(\In_w(I))$ ensures that the vector $w$ lies in the logarithmic limit set of $\V_{\R^*}(I)$.  Section \ref{sec:stable} deals with representations of polynomials as sums of squares modulo an ideal and more generally as elements of a preorder, which has implications for certain problems in real algebraic geometry and semidefinite programming. Our second main result gives conditions on a set of polynomials so that the preorder they generate is stable, as defined in \cite{SP}. 
In particular, if an initial ideal is real radical, there are degree bounds for the representation of polynomials as sums of squares modulo the ideal.  
These results can be better understood by embedding $\C^n$ in a weighted projective space $\proj^{(1,w)}$, which we discuss in Section~\ref{previous}.  In Section~\ref{sec:comp}, we will mention some of the current algorithms for computing real radicals and give some conditions under which these computations become more tractable.  We conclude by discussing the problem of determining the compactness of a real variety from its initial ideals. 

First we need to introduce notation and ideas from the theory of Gr\"obner bases and real algebraic geometry. For further background, see \cite{UGB} and \cite{PPSS}. 

We often use $\underline{x}^a$ to denote the monomial $x_1^{a_1}\hdots x_n^{a_n}$, $f(\underline{x})$ for $f(x_1, \hdots, x_n)$, and $\R[\underline{x}]$ for $\R[x_1, \hdots, x_n]$.
For $w \in \R^n$ and a polynomial $f(\underline{x}) = \sum_{a} f_{a}\underline{x}^a$, define
\[\deg_w(f) = \max\{w^Ta \;:\; f_a\neq 0\} \;\;\;\;\;\; \text{   and   }\;\;\;\;\;\In_w(f) = \sum_{a \;:\; w^Ta= \deg_w(f)}f_a\underline{x}^a.\] 
If $w = (1,1, \hdots, 1)$, we will drop the subscript $w$. 
For an ideal $I \subseteq \R[\underline{x}]$, define its \textbf{initial ideal}, $\In_w(I)$, as $\langle \In_w(f) \;:\; f \in I\rangle$.  For $w \in (\R_{\geq 0})^n$ we call $\{h_1, \hdots, h_s\} \subset I$ a \textbf{$w$-Gr\"obner basis} for $I$ if $\In_w(I) = \langle \In_w(h_1), \hdots, \In_w(h_s)\rangle$. Any $w$-Gr\"obner basis for $I$ generates $I$ as an ideal. 
For $F \subset \C$ and an ideal $I\subset \R[x_1, \hdots, x_n]$, let $\V_F(I)$ denote $\{p \in F^n \::\: f(p)=0 \;\;\;\forall f \in I\}$. 
\begin{definition}The \textbf{real radical} of an ideal $I$ is 
\[ \sqrt[\R]{I} \;\;: = \;\;\{ f \in \R[\underline{x}] \;:\; -f^{2m} \in \sum \R[\underline{x}]^2 +I \text{ for some }m \in \mathbb{Z}_+\},\] where $\sum \R[\underline{x}]^2 = \{ \sum h_i^2 \;:\; h_i \in \R[\underline{x}]\}.$
By the Positivstellensatz \cite[\S 2.2]{PPSS}, an equivalent characterization is
\[\sqrt[\R]{I} \;\;=\;\; \{f \in \R[\underline{x}]\;:\; f(p)=0 \;\;\;\forall p \in \V_{\R}(I) \}.\]
We call an ideal $I$ \textbf{real radical} if $\sqrt[\R]{I} =I$. \end{definition} 

\begin{prop}\label{subfan} If $\In_w(I)$ is real radical for some $w\in (\R_{\geq 0})^n$, then $I$ is real radical.\end{prop}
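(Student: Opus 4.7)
The plan is to prove the identity $\In_w(\sqrt[\R]{I}) = \In_w(I)$, from which $I = \sqrt[\R]{I}$ follows by the standard fact that $I \subseteq J$ and $\In_w(I) = \In_w(J)$ imply $I = J$ for $w \in (\R_{\geq 0})^n$. Since $I \subseteq \sqrt[\R]{I}$ yields $\In_w(I) \subseteq \In_w(\sqrt[\R]{I})$ automatically, the work is to establish the reverse inclusion: for every $f \in \sqrt[\R]{I}$, one has $\In_w(f) \in \In_w(I)$. By the real radical hypothesis, this is equivalent to showing $\In_w(f) \in \sqrt[\R]{\In_w(I)}$.

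Starting from a Positivstellensatz certificate $-f^{2m} = \sum_i h_i^2 + g$ with $g \in I$, I would compare $w$-homogeneous components. Set $D = 2m\deg_w(f)$ and $D^* = 2\max_i \deg_w(h_i)$; because $\R$ is formally real, the top-degree piece of $\sum h_i^2$ cannot cancel, so $\deg_w(\sum h_i^2) = D^*$. When $D^* \leq D$, matching the $w$-degree-$D$ part of the two sides expresses $-\In_w(f)^{2m}$ as a sum of squares of the relevant $\In_w(h_i)$'s plus $\In_w(g) \in \In_w(I)$, which yields $\In_w(f) \in \sqrt[\R]{\In_w(I)}$. The alternative subcase $\deg_w(g) < D$ would force $\In_w(f)^{2m} + \sum \In_w(h_i)^2 = 0$ as a polynomial identity over $\R$, hence $\In_w(f) = 0$, which is impossible.

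The case $D^* > D$ is where the hypothesis on $\In_w(I)$ enters crucially. Top-degree matching forces $\sum_{\deg_w h_i = D^*/2} \In_w(h_i)^2 = -\In_w(g) \in \In_w(I)$. A real radical ideal $J$ absorbs each summand of any sum of squares it contains: by the Positivstellensatz, $\sum p_i(q)^2 = 0$ over $\R$ for every $q \in \V_\R(J)$ forces each $p_i$ to vanish on $\V_\R(J)$, whence $p_i \in \sqrt[\R]{J} = J$. So every such $\In_w(h_i)$ lies in $\In_w(I)$. Choosing $k_i \in I$ with $\In_w(k_i) = \In_w(h_i)$ and exploiting the congruence $h_i^2 \equiv (h_i - k_i)^2 \pmod I$, I would replace each such $h_i$ by $h_i - k_i$ in the certificate, strictly decreasing $D^*$, and iterate until $D^* \leq D$.

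I expect the main obstacle to be technical rather than conceptual: ensuring termination of this $D^*$-reduction, together with the analogous termination underlying the standard Gr\"obner basis fact cited in the first paragraph, for arbitrary $w \in (\R_{\geq 0})^n$, since the set of $w$-degrees need not be well-ordered in $\R_{\geq 0}$. I would address this by performing the reductions against a fixed finite $w$-Gr\"obner basis of $I$, so that only finitely many $w$-degrees arise throughout the computation and the usual termination argument for weight-order reductions applies.
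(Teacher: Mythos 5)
Your argument is correct, and its engine is exactly the one the paper uses: the top $w$-degree part of a sum of squares over $\R$ cannot cancel, so it is a sum of squares of initial forms lying in $\In_w(I)$; real-radicality of $\In_w(I)$ absorbs each summand; lift each initial form to an element of $I$, subtract, and descend on $w$-degree. Where you differ is in the outer packaging. The paper does not go through Positivstellensatz certificates at all: it uses the characterization of real radical ideals as \emph{real} ideals (i.e., $\sum f_i^2 \in I \Rightarrow f_i \in I$) and runs a single minimal-counterexample descent on $\{\sum f_i^2 \in I : f_i \notin I\ \forall i\}$, which eliminates your case analysis on $D^*$ versus $D$, the intermediate identity $\In_w(\sqrt[\R]{I}) = \In_w(I)$, and the final appeal to the Gr\"obner comparison lemma ($I \subseteq J$, $\In_w(I)=\In_w(J) \Rightarrow I=J$) --- which itself needs the same descent you worry about. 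What your route buys is the explicit statement $\In_w(\sqrt[\R]{I}) = \In_w(I)$, which is slightly more information than the proposition asks for; what the paper's route buys is brevity. Finally, your termination worry is real but is resolved more simply than by fixing a Gr\"obner basis: for $w \in (\R_{\geq 0})^n$ the set $\{w^Ta : a \in \N^n\}$ of attainable $w$-degrees meets every interval $[0,C]$ in a finite set (each coordinate $a_i$ with $w_i>0$ is bounded by $C/w_i$, and coordinates with $w_i=0$ contribute nothing), so it is well-ordered and every strictly decreasing sequence of $w$-degrees terminates; this is also what licenses the paper's choice of a minimal-degree counterexample.
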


\begin{proof} If the set $\{\sum f_i^2 \in I \;:\; f_i \notin I \; \forall i\}$ is nonempty, then it has an element with minimal $w$-degree, $\sum_{i=1}^m f_i^2$. Then for $\mathcal{A} = \argmax_{i\in [m]}\{\deg_w(f_i)\}$, the polynomial $\sum_{i \in \mathcal{A}} \In_w(f_i)^2$ is in $\In_w(I)$. As $\In_w(I)$ is real radical, we have $\In_w(f_i) \in \In_w(I)$ for $i \in \mathcal{A}$. For $i \in \mathcal{A}$, let $g_i \in I$ so that $\In_w(f_i) = \In_w(g_i)$ and consider the following polynomial:
\[\sum_{i \in \mathcal{A}} (f_i - g_i)^2 + \sum_{i\in [m] \backslash \mathcal{A}}f_i^2 \;\;\;=\;\;\; \sum_{i \in [m]}f_i^2 + \sum_{i \in \mathcal{A}}g_i(-2f_i+g_i) \;\;\in \;I.\]
This is a sum of squares in $I$ with strictly lower $w$-degree than $\sum_i f_i^2$. As $f_i-g_i \notin I$ for all $i \in \A$, this contradicts our choice of  $\sum_i f_i^2$.
\end{proof}

A \textbf{cone} is a subset of $\R^n$ that is closed under addition and multiplication by nonnegative scalars. A cone is a \textbf{rational polyhedral} cone if it is the intersection of finitely many halfspaces defined by linear inequalities with rational coefficients. We say that a finite collection of rational polyhedral cones $\Delta$ is a \textbf{fan} if is closed under intersection and taking faces. The \textbf{support} of a fan $\Delta$, denoted $|\Delta|$, is the union of the cones in $\Delta$.
An ideal $I$ defines an equivalence relation on $\R^n$, by letting $w \sim v$  whenever $\In_{w}(I) = \In_{v}(I)$. One can show that there are only finitely many equivalence classes, and for homogeneous ideals each equivalence class is a relatively open rational polyhedral cone \cite{UGB}.  Together the closures of these cones form a fan called the \textbf{Gr\"obner fan} of $I$, denoted $\Delta_{Gr}(I)$. Using the software {\tt GFan} \cite{gfan}, one can actually compute the Gr\"obner fan of a homogeneous ideal. If $I$ is not homogeneous, then we homogenize by adding a new variable. 
For any $f \in \R[x_1, \hdots, x_n]$ and ideal $I \subset \R[x_1, \hdots, x_n]$, let $\f(x_0, x_1, \hdots, x_n)$ denote its homogenization $(x_0)^{\deg(f)}f(x_1/x_0, \hdots, x_n/x_0)$ and $\I = \langle \f \;:\; f\in I \rangle$.  We can define $\Delta_{Gr}(I)$ to be the cones in $\R^n$ obtained by intersecting cones of $\Delta_{Gr}(\overline{I})$ with the plane $\{w_0 =0\}$. The closure of an equivalence class $[w] = \{v \in \R^n \;:\; \In_v(I) = \In_w(I)\}$ will be a union of cones in $\Delta_{Gr}(I)$.  

Let $\Delta_{\R {\rm ad}}(I)$ denote the subset of $\Delta_{Gr}(I)$ corresponding to real radical initial ideals. That is, 
\[\Delta_{\R {\rm ad}}(I)\;\;:=\;\; \{\sigma \in \Delta_{Gr}(I)\;:\; \In_w(I) \text{ is real radical for }w\in {\rm relint}(\sigma) \}, \]
where ${\rm relint}(\sigma)$ denotes the relative interior of the cone $\sigma$.
For homogeneous $I$ and any $w \in \R^n$, $\In_w(I) = \In_{w+(1, \hdots, 1)}(I)$, thus we can assume $w \in (\R_{\geq 0})^n$. 
 Then Proposition \ref{subfan} ensures that for homogeneous $I$, $\Delta_{\R {\rm ad}}(I)$ is closed under taking faces, meaning that it is actually a subfan of $\Delta_{Gr}(I)$.\\ 
 
Sums of squares of polynomials are used to approximate the set of polynomials that are nonnegative on $\R^n$. We can use sums of squares modulo an ideal, $\sum \R[\underline{x}]^2+I$, to instead approximate the set of polynomials that are nonnegative on its real variety, $\V_{\R}(I)$. These methods extend to any basic closed semialgebraic set $\{x \in \R^n \;:\; g_1(x)\geq 0, \hdots, g_s(x) \geq 0\}$ by considering the \textit{quadratic module} or \textit{preorder} generated by $g_1, \hdots, g_s$.    

Formally, given a ring $R$ (e.g. $\R[\underline{x}]$, $\R[\underline{x}]/I$), we call $P \subset R$ a \textbf{quadratic module} if $P$ is closed under addition, multiplication by squares $\{f^2\;:\; f\in R\}$, and contains the element 1. If in addition, $P$ is closed under multiplication then we call $P$ a \textbf{preorder}.
 For $g_1, \hdots, g_s \in R$, we use $QM(g_1, \hdots, g_s)$ and $PO(g_1, \hdots, g_s)$ to denote the quadratic module and preorder generated by $g_1, \hdots, g_s$ (respectively), where $R$ will be inferred from context: 
\begin{align}
QM(g_1, \hdots, g_s) &= \biggl\{ \;\sigma_0+\sum_{i=1}^s g_i\sigma_i \;:\; \sigma_i \in \sum R^2 \text{ for } i=0,1,\hdots, s\;\biggl\}, \text{  and} \nonumber \\
PO(g_1, \hdots, g_s) &= \biggl\{\;\sum_{e \in \{0,1\}^s} g_1^{e_1}\hdots g_s^{e_s}\sigma_e \;:\;\sigma_e \in \sum R^2 \text{ for }e\in\{0,1\}^s\;\biggl\}.\nonumber \end{align}

There are advantages to both of these constructions. Preorders are often needed to obtain geometrical results (such as Proposition \ref{PO basis} and Theorem \ref{zariski}). From a computational point of view though preorders are less tractable than quadratic modules because the number of terms needed to represent an element is exponential in the number of generators. 

For any set of polynomials $S \subset \R[\underline{x}]$, let $K(S)$ denote the subset of $\R^n$ on which all the polynomials in $S$ are all nonnegative. That is, $K(S)=\{x \in \R^n : f(x) \geq\nolinebreak0 \;\; \forall f \in S\}$. Note that for $g_1, \hdots, g_s \in \R[\underline{x}]$, $K(\{g_1, \hdots, g_s\})$ and $K(PO(g_1, \hdots, g_s))$ are equal subsets of $\R^n$. If $P = \sum \R[\underline{x}]^2+I$, then $K(P) =K(I)= \V_{\R}(I)$. 

In this paper we are especially interested in semialgebraic sets that are not full-dimensional, that is, they are real varieties or contained in real varieties. The corresponding quadratic modules and preorders will contain a nontrivial ideal. For ease of notation, let $QM(g_1, \hdots, g_s; I)$ and $PO(g_1, \hdots, g_s;I)$ denote $QM(g_1, \hdots, g_s, \pm h_1, \hdots, \pm h_t)$ and $PO(g_1, \hdots, g_s, \pm h_1, \hdots, \pm h_t)$ respectively, where $I = \langle h_1, \hdots, h_t \rangle$. Then
\[QM(g_1, \hdots, g_s;I)=QM(g_1, \hdots, g_s)+I\;\;\;\;\text{ and }\;\;\;\;PO(g_1, \hdots, g_s;I) =PO(g_1, \hdots, g_s)+I. \label{QMPOdef}\]

For an ideal $I \subset \R[\underline{x}]$, the preorder $\sum \R[\underline{x}]^2+I$ has nice properties if $\In_w(I)$ is real radical, as we'll see in Section \ref{sec:stable}. To extend these properties to more general quadratic modules and preorders, we need the following definitions. 
For $g_1, \hdots, g_s \in \R[\underline{x}]$ and an ideal $I \subset \R[\underline{x}]$, say that $g_1, \hdots, g_s$ form a \textbf{quadratic module basis} (QM-basis) with respect to $I$ if for all $y_{ij} \in \R[\underline{x}]$,
\[ \sum_j y_{0j}^2 + \sum_{i,j} g_i y_{ij}^2 \in I \;\;\; \Rightarrow\;\;\; y_{ij} \in I \;\;\;\forall\; i,j.\;\;\;\;\;\;\]
Similarly, say that $g_1, \hdots, g_s$ form a \textbf{preorder basis} (PO-basis)\label{def:pobasis} with respect to $I$ if for all $y_{ej} \in \R[\underline{x}]$,
\[\;\;\;\;\;\; \sum_{e, j} g^e y_{ej}^2 \in I \;\;\; \Rightarrow\;\;\;y_{ej} \in I \;\; \;\forall \;e,j,\]
where $g^e = g_1^{e_1}\hdots g_s^{e_s}$ for $e \in \{0,1\}^s$.  Any PO-basis with respect to $I$ is also a QM-basis with respect to $I$, but the converse is not generally true. We see that $\emptyset$ is a PO-basis if and only if $I$ is real radical. As shown in Example \ref{ex:PO basis}, not every preorder has generators which form a PO-basis with respect to $P \cap -P$, though many do. The following proposition gives a geometric condition for elements forming a PO-basis.

\begin{prop} \label{PO basis} For $g_1, \hdots, g_s \in \R[\underline{x}]$ and an ideal $I \subset \R[\underline{x}]$, the polynomials $g_1, \hdots, g_s$ form a PO-basis with respect to $I$ if and only if the set $\{x \in \V_{\R}(I):g_i(x)>0 \text{ for all } i=1,\hdots, s\}$ is Zariski-dense in $\V_{\R}(I)$ and the ideal $I$ is real radical. \end{prop}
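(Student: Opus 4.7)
My plan is to prove the two directions separately. The ``$\Leftarrow$'' direction is a short density argument, while ``$\Rightarrow$'' combines Stengle's Positivstellensatz with two successive applications of the PO-basis hypothesis.

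For ``$\Leftarrow$,'' suppose $I$ is real radical and $U := \{x \in \V_\R(I) : g_i(x) > 0 \text{ for all } i\}$ is Zariski-dense in $\V_\R(I)$. Given a relation $\sum_{e,j} g^e y_{ej}^2 \in I$, I would evaluate at any $x \in U$: every $g^e(x)$ is strictly positive and every $y_{ej}(x)^2$ is nonnegative, so the sum must vanish term-by-term, forcing $y_{ej}(x) = 0$. Zariski-density of $U$ propagates this vanishing to all of $\V_\R(I)$, and real radicality of $I$ then yields $y_{ej} \in I$.

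For ``$\Rightarrow$,'' assume $g_1, \ldots, g_s$ form a PO-basis with respect to $I$. Specializing all $y_{ej}$ with $e \neq 0$ to zero reduces the PO-basis condition to the statement that $\emptyset$ is a PO-basis, which (as noted just before the proposition) is equivalent to $I$ being real radical and in particular makes $I$ radical. For the Zariski-density part, suppose for contradiction that some $p \notin I$ vanishes on $U$. Then $p \cdot g_1 \cdots g_s$ vanishes on the closed set $\{x \in \V_\R(I) : g_i(x) \geq 0 \text{ for all } i\}$: on $U$ because $p$ does, and on the complement because some $g_j$ vanishes there. Stengle's Positivstellensatz then produces an identity
\[ (p^m g_1^m \cdots g_s^m)^2 \;+\; \sum_{e, j} g^e h_{ej}^2 \;\in\; I.\]
The PO-basis hypothesis forces $p^m g_1^m \cdots g_s^m \in I$, and since $I$ is radical, $p \cdot g_1 \cdots g_s \in I$. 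Multiplying by $p$ gives $g_1 \cdots g_s \cdot p^2 = g^{(1, 1, \ldots, 1)} \cdot p^2 \in I$, which is itself a PO-expression (with $y_{(1,\ldots,1), 1} = p$ and all other $y_{ej} = 0$); a second application of the PO-basis hypothesis then forces $p \in I$, contradicting the choice of $p$.

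The main obstacle is that a single invocation of the PO-basis condition only produces $p \cdot g_1 \cdots g_s \in I$, not $p \in I$. The decisive trick is to multiply this relation by $p$ so that it lands in the $e = (1, \ldots, 1)$ slot of the preorder, enabling a second invocation of the hypothesis; this step relies on the preorder containing the full product $g_1 \cdots g_s$ and would not go through if we only had a QM-basis.
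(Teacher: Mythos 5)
Your proof is correct and follows essentially the same route as the paper's: the backward direction is the identical evaluate-on-$K_+$-then-use-density argument, and the forward direction rests on the same two ingredients, namely the Positivstellensatz certificate for a polynomial vanishing on the full set $K(\{g_1,\hdots,g_s\}\cup I)$ after multiplying by $g_1\cdots g_s$, and the trick of placing $p^2 g_1\cdots g_s$ in the $e=(1,\hdots,1)$ slot of the preorder so the PO-basis condition applies. The only cosmetic difference is that the paper works with $\hat f=f^2\prod_i g_i$ and routes through the identity $P\cap -P=I$, whereas you apply the PO-basis hypothesis once directly to the certificate and a second time after multiplying by $p$; both amount to the same mechanism.
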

\begin{proof}
Let $P = PO(g_1, \hdots, g_s;I)$. By a slight abuse of notation, let \[K_+(P) = \{x \in \V_{\R}(I)\;:\; g_i(x) >0 \text{ for all } i=1,\hdots, s\} \subset K(P).\] 

($\Leftarrow$) 
Suppose $\sum_{e,j}g^e y_{ej}^2 \in I$. For all $x \in K_+(P)$,  we have $\sum_{e,j}g^e(x)y_{ej}(x)^2 =0$. Since $g^e(x)>0$, we have that  $y_{ej}(x)=0$ for all $x \in K_+(P)$. Then by Zariski-denseness of $K_+(P)$, we have $y_{ej} = 0$ on $\V_{\R}(I)$. Since $I$ is real radical, this gives us that $y_{ej} \in I$. 

($\Rightarrow$) 
We have $\sum y_j^2 \in I$ implies $y_j \in I$, so $I$ is real radical.
It is not difficult to check that $P \cap -P = I$. 
Suppose $f = 0$ on $K_+(P)$. Then $\hat f = f^2 \prod_i g_i = 0$ on $K(P)$. By the Positivstellensatz \cite[Ch. 2]{PPSS}, 
$-\hat f^{2m} \in P \cap -P = I$ for some $m \in \N$. Since $I$ is real radical, we have $\hat f  = f^2 \prod_i g_i \in I$. Because $g_1, \hdots, g_s$ form a PO-basis with respect to $I$, we see that $f \in I$. Thus $K_+(P)$ is Zariski-dense in $\V_{\R}(I)$. 
\end{proof}
\begin{example}\label{ex:PO basis} Here is an example of a preordering $P$ where $P \cap -P$ is real radical 
but no set of generators for $P$ form a PO-basis with respect to $P \cap -P$. Consider $P = PO(x,y ; \langle xy \rangle)$. Notice that $K(P) \subset \R^2$ is the union of the nonnegative $x$ and $y$ axes. We have $P \cap -P = \langle xy \rangle$, which is real radical. 


Now suppose $g_1, \hdots, g_s$ form a PO-basis for $\langle xy \rangle$. We will show $x \notin PO(g_1, \hdots, g_s; \langle xy \rangle)$, meaning $P \neq PO(g_1, \hdots, g_s; \langle xy \rangle)$. By Proposition \ref{PO basis}, there exists $p \in \R_{\geq 0}$ so that $g_i(0,p) >0$ for all $i=1, \hdots, s$. Then for all $q$ in a small enough neighborhood of $p$, $g_i(0,q)>0$. Now suppose $x = \sum_e g^e\sigma_e + xyh$ for some $\sigma_e \in \sum \R[x,y]^2$ and $h \in \R[x,y]$.  Plugging in $(0,q)$ gives $\sum_e g^e(0,q) \sigma_e(0,q)=0$. As $g^e(0,q)>0$ and $\sigma_e(0,q)\geq 0$, this implies that $\sigma_e(0,q)=0$ for every $e$. As this occurs for every $q$ in a neighborhood of $p$, we have that $\sigma_e \in \langle x \rangle$ for every $e$. Because $\sigma_e$ is a sum of squares, we actually have $\sigma_e \in \langle x^2 \rangle$.  This means that $\sum_e g^e\sigma_e + xyh$ and its partial derivative $\partial/\partial x$ vanish at $(0,0)$, which is not true of $x$. Thus $x \notin PO(g_1, \hdots, g_s;\langle xy \rangle)$ and $P \neq PO(g_1, \hdots, g_s; \langle xy \rangle)$. 
\end{example}

We will need PO-bases for the geometric result of Theorem \ref{zariski} but only QM-bases for the more algebraic result of Theorem \ref{stable}.

\section{Real Tropical Geometry}\label{tropical}
Tropical geometry is often used to answer combinatorial questions in algebraic geometry. For an introduction, see \cite{trop book}, \cite{trop2}.  We investigate analogous constructions for real algebraic geometry. 

\begin{definition}Given an ideal $I$, define its \textbf{tropical variety} as
\[\Trop(I)\;: = \;\{w \in \R^n \;\;:\; \In_w(I) \text{ contains no monomials}\}.\]
\end{definition}
Soon we will see an equivalent definition involving logarithmic limit sets.

\begin{definition} The \textbf{logarithmic limit set} of a set $V \subset (\R_+)^n$ is defined to be
\begin{align} \LL(V) := \;\;&\lim_{t\rightarrow 0} \log_{1/t}(V) \nonumber\\
= \;\;&\{x \in \mathbb{R}^n \;:\; \text{ there exist sequences } y(k) \in V \text{ and }t(k) \in (0,1), \nonumber \\ 
&\;\;t(k) \rightarrow 0   \text{ with }  \log_{\frac{1}{t(k)}} (y(k)) \rightarrow x \}. \nonumber \end{align} \end{definition}
The operations $\log$ and $|\cdot|$ on $\R^n$ are taken coordinate-wise. For a subset $V$ of $(\R^*)^n$ or $(\C^*)^n$, we will use $\LL(V)$ to denote $\LL(|V|)$, where $|V| = \{|x| \;:\; x \in V\}$. 
A classic theorem of tropical geometry states that $\Trop(I) = \LL(\V_{\C}(I))$, \cite[\S1.6]{trop book}. \\

Two analogs have been developed for varieties of $\R_+$, which can easily be extended to $\R^*$. 
First is the \textbf{positive tropical variety}, $\Trop_{\R_+}$, studied by Speyer and Williams for Grassmannians  \cite{real trop} and implicit in Viro's theory of patchworking \cite{viro}:
\[\Trop_{\R_+}(I) =\{w \in \R^n \;:\; \In_w(I) \text{ does not contain any nonzero polynomials in } \R_+[x_1, . . . , x_n]\}. \]
To extend this to $\R^*$, we simply take the union over the different orthants of $\R^n$. For each $\pi \in \{-1,1\}^n$, define the ideal $\pi\cdot I = \{f(\pi_1x_1, \hdots, \pi_n x_n) \;:\; f \in I\}$. Then we can define
\[\Trop_{\R^*}(I) \;\;= \bigcup_{\pi \in \{-1,1\}^n}\Trop_{\R_+}(\pi \cdot I),\]
which we'll call the \textbf{real tropical variety} of $I$.  By Proposition~\ref{ET}, $\Trop_{\R^*}(I)$ is the support of smallest subfan of $\Delta_{Gr}(I)$ containing $\{w\;:\; \V_{\R^*}(\In_w(I)) \neq \emptyset \}$.
On the other hand, $\LL(\V_{\R^*}(I))$ is studied by Alessandrini \cite{log limit}, who shows that \[\LL(\V_{\R^*}(I)) \subseteq \Trop_{\R^*}(I).\]
Unfortunately, it is not always possible to describe $\LL(\V_{\R^*}(I))$ solely in terms of initial ideals (see the example on page \pageref{dissonance}). While our results below involve $\LL(\V_{\R^*}(I))$, $\Trop_{\R^*}(I)$ and $\Trop(I)$ are much more practical for computation. 
Now we are ready to present the connection between real radical initial ideals and these tropical constructions. 

\begin{thm} Let $g_1, \hdots, g_s \subset \R[\underline{x}]$, an ideal $I \subset \mathbb{R}[\underline{x}]$, and $w\in \Trop(I)$. 
If $ \In_w(I)$ is real radical and $\In_w(g_1), \hdots, \In_w(g_s)$ form a preorder basis with respect to $\In_w(I)$, then $w\in \LL(K_{\R^*})$, where $K_{\R^*} = \{x\in \V_{\R^*}(I) \;:\; g_i(x)\geq 0 \;\;\forall \;i=1,\hdots, s\}$.  
\label{zariski} \end{thm}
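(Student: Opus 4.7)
The plan is to exhibit $w \in \LL(K_{\R^*})$ by constructing, for each small $t > 0$, a point $x(t) \in K_{\R^*}$ satisfying $\log_{1/t}|x(t)| \to w$ as $t \to 0^+$. The construction picks a smooth seed point $\xi \in \V_{\R^*}(\In_w(I))$ at which $\In_w(g_i)(\xi) > 0$ for every $i$, and then lifts $\xi$ to $\V_{\R^*}(I)$ along the $w$-Gr\"obner degeneration using the implicit function theorem.

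I first produce the seed. Proposition \ref{PO basis} applied to $\In_w(I)$ and $\In_w(g_1),\dots,\In_w(g_s)$ shows that
\[U := \{x \in \V_\R(\In_w(I)) : \In_w(g_i)(x) > 0 \text{ for all } i\}\]
is Zariski-dense in $\V_\R(\In_w(I))$. Real radicality of $\In_w(I)$ implies ordinary radicality, so $\V_\C(\In_w(I))$ has vanishing ideal $\In_w(I)$; combined with $w \in \Trop(I)$ (no monomial lies in $\In_w(I)$), no coordinate $x_i$ vanishes identically on $\V_\C(\In_w(I))$, hence no irreducible component of the real variety is contained in a coordinate hyperplane. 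Real radicality also makes $\V_\R(\In_w(I))$ Zariski-dense in $\V_\C(\In_w(I))$, so the smooth locus of $\V_\C(\In_w(I))$ meets it Zariski-densely. Intersecting this locus with $U$ and $(\R^*)^n$ yields the required $\xi$: a smooth complex point lying in the real torus with every $\In_w(g_i)(\xi) > 0$.

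Next I lift $\xi$ to nearby points of $\V_{\R^*}(I)$. For $t > 0$ set $\phi_t(x) := (t^{w_1}x_1,\dots,t^{w_n}x_n)$ and
\[f_t(y) := t^{\deg_w f}\,f(\phi_t^{-1}(y)) = \In_w(f)(y) + \sum_{w^Ta < \deg_w f} f_a\, t^{\deg_w(f) - w^Ta}\, y^a,\]
so $f_t \to \In_w(f)$ as $t \to 0^+$. Fixing a $w$-Gr\"obner basis $h_1,\dots,h_r$ of $I$, the family $\{(h_i)_t\}_i$ cuts out $\phi_t(\V_{\R^*}(I))$ for $t > 0$ and generates $\In_w(I)$ at $t = 0$. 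Smoothness of $\xi$ on the $d$-dimensional germ of $\V_\C(\In_w(I))$ at $\xi$ lets me pick $n - d$ of the $\In_w(h_i)$ with linearly independent gradients at $\xi$, whose common zero set equals $\V_\C(\In_w(I))$ locally. The parametric implicit function theorem applied to the corresponding $(h_{i_j})_t$ in variables $(y, t)$ near $(\xi, 0)$ yields a continuous branch $y(t) \to \xi$ solving $(h_{i_j})_t(y(t)) = 0$, and local germ-equality at $t = 0$ forces the remaining $(h_i)_t$ to vanish along this branch for small $t$, so $y(t) \in \phi_t(\V_{\R^*}(I))$. Setting $x(t) := \phi_t^{-1}(y(t)) \in \V_{\R^*}(I)$, the identity $g_i(x(t)) = t^{-\deg_w g_i}(g_i)_t(y(t))$ together with $(g_i)_t(y(t)) \to \In_w(g_i)(\xi) > 0$ gives $x(t) \in K_{\R^*}$ for small $t$; and $|x(t)_j| = t^{-w_j}|y(t)_j|$ with $|y(t)_j| \to |\xi_j| \in (0,\infty)$ yields $\log_{1/t}|x(t)_j| \to w_j$, placing $w \in \LL(K_{\R^*})$.

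The main obstacle is the seed construction: smoothness of $\xi$ on $\V_\C(\In_w(I))$, membership in $(\R^*)^n$, and strict positivity of every $\In_w(g_i)$ must hold simultaneously at a single point. This is where the PO-basis hypothesis (through Proposition \ref{PO basis}) combines essentially with real radicality of $\In_w(I)$ — invoked twice, to preclude coordinate-hyperplane containment via $w \in \Trop(I)$ and to secure Zariski density of $\V_\R(\In_w(I))$ in $\V_\C(\In_w(I))$. Once $\xi$ is in hand, the Gr\"obner-degeneration lift via the implicit function theorem is essentially standard.
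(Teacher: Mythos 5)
Your overall architecture matches the paper's: produce a seed point $\xi$ that is simultaneously nonsingular, in the torus, and has $\In_w(g_i)(\xi)>0$ (using Proposition \ref{PO basis}, real radicality twice, and $w\in\Trop(I)$ exactly as the paper does), lift it along the Gr\"obner degeneration to a real branch of $\V_{\R^*}(I)$, verify the sign conditions in the limit, and take logarithms. The seed construction and the final limit computation are fine. The gap is in the lifting step.

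Concretely, the unjustified claim is: ``local germ-equality at $t=0$ forces the remaining $(h_i)_t$ to vanish along this branch for small $t$.'' Your implicit-function-theorem branch $y(t)$ solves only the $n-d$ equations $(h_{i_j})_t=0$, whose common zero locus for $t>0$ is in general strictly larger than $\phi_t(\V_{\C}(I))$; the branch may well land in the excess. Germ-equality of the two zero sets holds only in the fiber $t=0$, and nothing propagates it to $t>0$: the restriction of a remaining $(h_i)_t$ to the smooth $(d+1)$-manifold cut out by the subsystem vanishes on the $t=0$ slice but need not vanish identically near $(\xi,0)$. This is precisely the difficulty the paper's Lemma \ref{R^n} is built to handle: it chooses a $w$-homogeneous $h$ with $h(p)\neq 0$ and $h\cdot\In_w(f_k)\in\langle\In_w(f_1),\hdots,\In_w(f_m)\rangle$, forms $I'=\langle f_1,\hdots,f_m,hf_{m+1},\hdots,hf_r\rangle$, and shows $\{f_1,\hdots,f_m\}$ is a $w$-Gr\"obner basis for $I'$, hence \emph{globally} generates $I'$. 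Then $\V_{\R}(f_1,\hdots,f_m)=\V_{\R}(I')\subseteq\V_{\R}(I)\cup\V_{\R}(h)$, and since $h$ is $w$-homogeneous and nonzero at the limit point, the branch lies in $\V_{\R}(I)$. You need some version of this localization-and-clearing-denominators argument (or a flatness argument, which however only yields complex points and so does not suffice here); without it the lift is not established.

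Two smaller points. First, the exponents $\deg_w(h_i)-w^Ta$ appearing in $(h_i)_t$ need not be integers, so $(h_i)_t$ is continuous but not obviously $C^1$ in $t$ at $t=0$; to apply the implicit function theorem with parameter you should either pass to a rational representative of $w$ in its Gr\"obner cone and substitute $t=s^N$, or invoke a parametric IFT requiring only continuity in $t$. (The paper sidesteps differentiability entirely by using the Poincar\'e--Miranda theorem for each fixed scaling $c$.) Second, your assertion that a $w$-Gr\"obner basis generates $I$ is the standard fact only for $w\in(\R_{\geq0})^n$; for general $w$ the paper reduces to this case by homogenizing, and your argument should either do the same or take a generating set of $I$ that is also a $w$-Gr\"obner basis.
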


This provides the first inner approximation of $ \LL(K_{\R^*})$ in terms of $\In_w(I)$ and $\In_w(g_i)$. In fact, Lemma \ref{R^n} gives a stronger such inner approximation, discussed in Remark~\ref{rem:primDecop}.  Also Theorem~\ref{zariski} shows that if there exists $w \in \Trop(I) \backslash (\R_{\leq 0})^n$ such that $\In_w(I)$ is real radical and $\In_w(g_1), \hdots, \In_w(g_s)$ forms a PO-basis with respect to $\In_w(I)$, then the corresponding semialgebraic set $K(\{g_1, \hdots, g_s\}\cup I)$ is not compact. For further discussion of compactness, see Corollary~\ref{cor:compact}.

\begin{lemma}\label{compact}
For a polynomial $g \in \mathbb{R}[\underline{x}]$, $w \in \mathbb{R}^n$, and a compact set $S \subset (\mathbb{R}^*)^n$, if $\In_w(g) > 0$ on $S$, then there exists $c_0>0$ so that for every $c>c_0$,  $g(c^w\cdot S) \subset (0,\infty)$, where $c^w\cdot x = (c^{w_1}x_1, \hdots, c^{w_n}x_n)$     and $c^w\cdot S = \{c^w\cdot x\;:\;x \in S\}$.
\end{lemma}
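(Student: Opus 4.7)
The plan is to substitute $c^w \cdot x$ into $g$ and peel off the dominant contribution coming from $\In_w(g)$, treating the remaining terms as an error that vanishes uniformly as $c \to \infty$.

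Concretely, write $g(\underline{x}) = \sum_a g_a \underline{x}^a$ and let $d = \deg_w(g)$, so that
\[
g(c^w \cdot x) \;=\; \sum_a g_a\, c^{w^T a}\, x^a \;=\; c^d \left( \In_w(g)(x) + \sum_{w^T a < d} g_a\, c^{w^T a - d}\, x^a \right).
\]
Call the parenthesized expression $\Phi(c,x)$ and the sum on the right $R(c,x)$, so $\Phi(c,x) = \In_w(g)(x) + R(c,x)$. Since $c^d > 0$ for all $c > 0$, it suffices to show $\Phi(c,x) > 0$ on $S$ for all $c$ large enough.

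The first step is to use compactness of $S$ together with the hypothesis $\In_w(g) > 0$ on $S$: since $\In_w(g)$ is continuous and $S$ is compact, there exists $m > 0$ with $\In_w(g)(x) \geq m$ for every $x \in S$. The second step is to bound $R(c,x)$ uniformly. Each term $g_a\, c^{w^T a - d}\, x^a$ with $w^T a < d$ has a strictly negative power of $c$, and the factor $x^a$ is bounded on the compact set $S \subset (\R^*)^n$ (bounded away from the coordinate hyperplanes as well, though we only need the upper bound here). Thus there is a constant $M$, depending only on $g$ and $S$, with
\[
|R(c,x)| \;\leq\; M \cdot \max_{w^T a < d} c^{w^T a - d} \quad \text{for all } x \in S,
\]
and the right-hand side tends to $0$ as $c \to \infty$. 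Choose $c_0$ large enough that this bound is less than $m/2$ for all $c > c_0$; then $\Phi(c,x) \geq m/2 > 0$ on $S$, so $g(c^w \cdot x) \geq c^d \cdot m/2 > 0$ as required.

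There is no real obstacle here; the argument is a standard dominant-term analysis. The only minor point to watch is that $d$ could be negative or zero, but this does not affect the argument because $c^d$ is positive for any real $d$ and $c > 0$, so only the sign of $\Phi(c,x)$ matters. It is also important that $S$ stays in $(\R^*)^n$ so that the monomials $x^a$ for negative exponents (if $a$ has negative entries—here $a \in \N^n$, so this is moot, but the bounded-away-from-zero property is only needed if one wanted a two-sided bound).
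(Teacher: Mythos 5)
Your proof is correct and follows essentially the same route as the paper's: bound the $w$-leading part of $g$ away from zero on the compact set $S$, bound the lower $w$-degree terms uniformly on $S$, and observe that after rescaling by $c^w$ the latter are suppressed by strictly negative powers of $c$ relative to the former. The only cosmetic difference is that you factor out $c^{d}$ and work with a uniform remainder bound, whereas the paper compares the terms via extremal points of $S$; the substance is identical.
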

\begin{proof}Write $g = g_d+\hdots +g_{d'}$ where $d>\hdots>{d'}$ and $g_k$ is $w$-homogeneous with $w$-degree $k$. So $\deg_w(g)=d$ and $\In_w(g) =g_d$.  Let $a= \argmin_{x \in S}|g_d(x)|$ and  $b_j = \argmax_{x \in S}|g_j(x)|$ for $j\neq d$. As $\In_w(g) >0$ on $S$, we have $|g_d(a)| = |\In_w(g)(a)| \neq 0$. Then
\[\lim_{c\rightarrow \infty} \frac{\sum_{j\neq d} |g_j(c^w\cdot b_j)|}{|g_d(c^w\cdot a)|}\; = \;\lim_{c\rightarrow \infty} \frac{\sum_{j\neq d}| c^jg_j(b_j)|}{|c^d g_d(a)|} \;= \;\frac{1}{|g_d(a)|}\lim_{c\rightarrow \infty} \sum_{j\neq d} \frac{c^j}{c^d} |g_j(b_j)|\; =\;0.\]
Thus there exists $c_0>0$ so that for every $c>c_0$ and for any $x \in S$  
\[ |g_d(c^w \cdot x)| \;\geq \;|g_d(c^w \cdot a)| \;> \;\sum_{j\neq d}|g_j(c^w \cdot b_j)|\; \geq\; \sum_{j \neq d}|g_j(c^w \cdot x)|.\]
Since $c^dg_d(x) = g_d(c^w\cdot x) >0$, we have $g(c^w\cdot x) \geq g_d(c^w\cdot x) - \sum_{j\neq d}|g_j(c^w\cdot x)| >0$.
This gives that $g(c^w \cdot x)>0$. \end{proof}

\begin{lemma}\label{grad} 
Let $I \subset \R[\underline{x}]$ be an ideal and $w \in (\R_{\geq 0})^n$. Suppose $p \in \V_{\R}(\In_w(I))$ such that the vectors $\{\nabla \In_w(f_i)(p)\;:\;i=1,\hdots,m \}$ are linearly independent, where $\In_w(I) = \langle \In_w(f_1), \hdots, \In_w(f_m) \rangle$ and $f_i \in I$. Then there exists $c_0 \in \R_+$ and sequence $\{x_c\}_{c \in (c_0, \infty)}$ such that $\{c^w \cdot x_c\}_{c \in (c_0,\infty)} \subset \V_{\R}(I)$ and $x_c \rightarrow p$ as $c \rightarrow \infty$. 
\end{lemma}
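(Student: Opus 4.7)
The plan is to realize the equations $f_i(c^w \cdot x_c) = 0$ as a small continuous perturbation of $\In_w(f_i)(x) = 0$ and apply a parametric implicit function theorem. Since $\In_w(I) = \langle \In_w(f_1), \ldots, \In_w(f_m)\rangle$ with $f_i \in I$ and $w \in (\R_{\geq 0})^n$, the set $\{f_1, \ldots, f_m\}$ is a $w$-Gr\"obner basis for $I$ and therefore generates $I$. So it is enough to produce $x_c \to p$ with $f_i(c^w \cdot x_c) = 0$ for $i = 1, \ldots, m$.

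Setting $t = 1/c$ and $d_i = \deg_w(f_i)$, for $t > 0$ we have
$$t^{d_i} f_i(t^{-w_1} x_1, \ldots, t^{-w_n} x_n) \;=\; \In_w(f_i)(x) + \sum_{a:\, w \cdot a < d_i} (f_i)_a\, t^{d_i - w \cdot a}\, x^a.$$
Call this expression $H_i(t, x)$. Because every exponent $d_i - w \cdot a$ appearing in the sum is strictly positive, the right-hand side extends continuously to $t = 0$ with value $\In_w(f_i)(x)$, and differentiating termwise shows that each $\partial H_i/\partial x_j$ is also jointly continuous on $[0, \infty) \times \R^n$.

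At $(0, p)$ the hypothesis $p \in \V_{\R}(\In_w(I))$ gives $H(0, p) = 0$, and the Jacobian $\partial H/\partial x\,(0, p)$ has rows $\nabla \In_w(f_i)(p)$, which are linearly independent. Choose $J \subset \{1, \ldots, n\}$ of size $m$ so that the associated $m \times m$ submatrix $A$ is invertible, and apply the Banach-fixed-point version of the implicit function theorem to the map $\phi(t, y) = y - A^{-1} H(t, (y, p_{\overline{J}}))$. This yields $t_0 > 0$ and a continuous curve $y \colon [0, t_0) \to \R^m$ with $y(0) = p_J$ and $H(t, (y(t), p_{\overline{J}})) = 0$. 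Setting $c_0 = 1/t_0$ and $x_c = (y(1/c), p_{\overline{J}})$, the identity $H_i(1/c, x_c) = 0$ rewrites as $f_i(c^w \cdot x_c) = 0$, while $x_c \to p$ follows from continuity of $y$ at $0$.

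The main obstacle is that $H_i$ is generally not $C^1$ (or even differentiable) in $t$, since $w$ can have irrational coordinates and the exponents $d_i - w \cdot a$ need not be integers. This rules out the smooth version of the IFT; however, the contraction argument above uses only joint continuity of $H$ in $(t,x)$ together with $C^1$-dependence on $x$, so the conclusion still holds.
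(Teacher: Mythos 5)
Your proof is correct, but it takes a genuinely different route from the paper. The paper applies the Poincar\'e--Miranda theorem: it constructs an $m$-dimensional cube $B(\epsilon)$ around $p$ in the affine slice $p + \mathrm{span}\{\nabla\In_w f_i(p)\}$, uses Lemma~\ref{compact} to show that for $c$ large enough each $f_i$ has the required opposite signs on the opposite facets of $c^w \cdot B(\epsilon)$, extracts a zero $x_{\epsilon,c} \in B(\epsilon)$, and then lets $\epsilon \to 0$ along with $c \to \infty$ to force $x_c \to p$. You instead rescale by setting $H_i(t,x) = t^{d_i} f_i(t^{-w}\cdot x)$, observe that this extends continuously (with jointly continuous $x$-Jacobian) to $t = 0$ with value $\In_w(f_i)(x)$, and run a Banach-fixed-point version of the implicit function theorem in the coordinate slice $x_{\bar J} = p_{\bar J}$. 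Both arguments must confront the same obstacle --- the exponents $d_i - w\cdot a$ can be irrational, so $H$ is not smooth in $t$ --- and resolve it in different ways: the paper by a magnitude comparison for fixed $c$ (Lemma~\ref{compact}) that only needs sign information, you by noting that the contraction-mapping IFT requires only joint continuity of $H$ and of $\partial H/\partial x$, not differentiability in the parameter $t$. Your approach actually yields a slightly stronger conclusion, a \emph{continuous} curve $c \mapsto x_c$, though the lemma and its uses downstream only require convergence $x_c \to p$. The paper's Poincar\'e--Miranda argument avoids the fixed-point machinery and is arguably more self-contained given that Lemma~\ref{compact} is already in place; yours is more standard analysis and avoids the two-parameter $(\epsilon,c)$ bookkeeping at the end.
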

\begin{proof}
The basic idea is that for a compact set $S$ and large enough $c>0$, polynomials act like their initial forms on $c^w \cdot S$. 
Since $w \in \R_{\geq 0}$ and $\In_w(I) = \langle \In_w(f_1), \hdots, \In_w(f_m)\rangle$, we have $I = \langle f_1, \hdots, f_m\rangle$.
If $m<n$, let $H$ denote the $m$-dimensional affine space $p+{\rm span}\{\In_w(f_i)(p)\;:\; i=1,\hdots,m\}$.

For every $\epsilon \in \mathbb{R_+}$, define the following affine transformation of an $m$-dimensional cube:
\[B(\epsilon) := \{x \in H \;:\;  |\nabla \In_w f_i(p) \cdot (x-p)| \leq \epsilon \;\;\forall \;i\leq m\},\]
with facets for each $i \leq m$, 
\begin{align}B_i^-(\epsilon)&:= \{x \in B(\epsilon) \;:\;  \nabla \In_w f_i(p) \cdot (x-p) = - \epsilon \}, \nonumber \\
B_i^+(\epsilon)&:= \{x \in B(\epsilon) \;:\;  \nabla \In_w f_i(p) \cdot (x-p) =  \epsilon \}.\nonumber\end{align}
Because $\nabla\In_w(f_i)(p) \neq \underline{0}$, for small enough $\epsilon>0$ we have that $\In_w f_i (B_i^-(\epsilon)) \subset (-\infty, 0)$  and $\In_w f_i (B_i^+(\epsilon)) \subset (0,\infty)$, for each $i \leq m$. By the compactness of $B(\epsilon)$ and $B_i^{\pm}(\epsilon)$ and Lemma~\ref{compact}, there is some $c_{\epsilon}>0$ so that for each $c > c_{\epsilon}$ and $i\leq m$, \[ f_i ( c^w\cdot B_i^-(\epsilon)) \subset (-\infty, 0)  \;\;\;\;\text{ and }\;\;\;\;  f_i (c^w\cdot B_i^+(\epsilon) ) \subset (0,\infty). \] 
 
We will show that for every $c > c_{\epsilon}$, there is $x_{\epsilon,c} \in  B(\epsilon)$ with $c^w\cdot x_{\epsilon,c} \in \V_{\R}(I)$. To do this, we use the Poincar\'e-Miranda Theorem, which is a generalization of the intermediate value theorem \cite{P-M}.
Let $J^m :=[0,1]^m \subset \mathbb{R}^m$ and for each $i\leq m$, denote
\[ J_i^- := \{x \in J^m \;:\;x_i=0\} \;\;\;\;\;\;\text{ and }\;\;\;\;\;\;  J_i^+:= \{x \in J^m \;:\;x_i=1\}.\]

\begin{PM*}[\cite{P-M}]\label{P-M} 
Let $\psi:J^m \rightarrow \mathbb{R}^m$, $\psi = (\psi_1, \hdots, \psi_m)$, be a continuous map such that for each $i\leq m$, $\psi_i(J_i^-) \subset  (-\infty, 0] $ and $\psi_i(J_i^+) \subset [0,\infty)$. Then there exists a point $y \in J^m$ such that $\psi(y)=\underline{0} = (0,\hdots, 0)$.
\end{PM*}
Define a homeomorphism $\phi_c:\mathbb{R}^m \rightarrow c^w\cdot H$ such that $\phi_c(J^m) = c^w \cdot B(\epsilon)$ and for $i\leq m$,
\[  \phi_c(J_i^-) = c^w \cdot B_i^-(\epsilon) \;\;\;\;\;\;\text{ and }\;\;\;\;\;\; \phi_c(J_i^+)=c^w\cdot B_i^+(\epsilon).\]
Let $\psi=f \circ \phi_c$, where $f = (f_1, \hdots, f_m)$. Then $\psi :J^m \rightarrow \R^m$ is continuous and for each $i\leq m$, 
\[\psi_i(J_i^-) = f_i(c^w \cdot B_i^-(\epsilon)) \subset (-\infty, 0) \;\;\;\;\;\;\text{ and }\;\;\;\;\;\; \psi_i(J_i^+) = f_i(c^w \cdot B_i^+(\epsilon)) \subset (0, \infty).\]
By the Poincar\'e-Miranda Theorem, there exists $y \in J^m$ so that $\psi(y)=\underline{0}$. Then let $ x_{\epsilon,c} = c^{-w}~\cdot~\phi_c^{-1}(y)$. This gives $c^w \cdot x_{\epsilon,c} \in c^w \cdot B(\epsilon) = \phi_c^{-1}(J)$, meaning $x_{\epsilon, c} \in B(\epsilon)$. Also, $f(c^w \cdot x_{\epsilon, c}) = \psi(y) =\underline{0}$, which implies $c^w \cdot x_{\epsilon, c} \in \V_{\R}(I)$.  

We have that for every $c >c_{\epsilon}$, there exists $x_{\epsilon,c} \in B(\epsilon)$ such that $c^w\cdot x_{\epsilon,c} \in \V_ {\R}(I)$. 
We will let $x_c = x_{\epsilon, c}$ for appropriately chosen $\epsilon$. Fix a $\epsilon_0>0$. 
For every $c$ such that $c>c_{\epsilon_0}$ there exists $\epsilon_c$ so that $c >c_{\epsilon}$. By increasing $c$, we may choose $\epsilon_c \rightarrow 0$.  Let $x_c = x_{\epsilon_c, c}$. 
For every $c>c_{\epsilon_0}$, $c^w \cdot x_c \in \V_{\R}(I)$. Moreover, because $\epsilon_c \rightarrow 0$ and $x_c=x_{\epsilon_c,c} \in B(\epsilon_c)$, we see that $x_c \rightarrow p$ as $c\rightarrow \infty$.  \end{proof} 

\begin{lemma}\label{R^n}  Let $I \subset \R[\underline{x}]$ and $w \in \R^n$. If $p \in \V_{\R}(\In_w(I))$ is nonsingular, then there exists a $c_0 \in \R_+$ and $\{x_c\}_{c \in (c_0, \infty)}$ such that $\{c^w \cdot x_c\}_{c \in (c_0,\infty)} \subset \V_{\R}(I)$ and $x_c \rightarrow p$ as $c \rightarrow \infty$. \end{lemma}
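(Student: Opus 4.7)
My plan is to reduce Lemma~\ref{R^n} to Lemma~\ref{grad}, which assumes the weight lies in $(\R_{\geq 0})^n$. The reduction goes via homogenization. Let $\I \subset \R[x_0,x_1,\ldots,x_n]$ be the homogenization of $I$, and fix $s \geq \max\{0,-w_1,\ldots,-w_n\}$. Since $\I$ is homogeneous, $\In_u(\I)$ is invariant under adding multiples of $(1,1,\ldots,1)$ to $u$, so the weight $\tilde w := (s,\, s+w_1,\,\ldots,\,s+w_n) \in (\R_{\geq 0})^{n+1}$ satisfies $\In_{\tilde w}(\I) = \In_{(0,w)}(\I)$. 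The standard identification $\In_{(0,w)}(\I)\big|_{x_0=1} = \In_w(I)$ shows that the dehomogenization of $\V_\R(\In_{\tilde w}(\I))$ at $x_0=1$ is exactly $\V_\R(\In_w(I))$. Thus $p$ lifts to $\bar p := (1,p) \in \V_\R(\In_{\tilde w}(\I))$, and since the dehomogenization is a local isomorphism near the affine chart $\{x_0=1\}$, nonsingularity of $p$ in $\V_\R(\In_w(I))$ transfers to nonsingularity of $\bar p$ in $\V_\R(\In_{\tilde w}(\I))$.

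Having set up a nonsingular point $\bar p$ with $\tilde w \in (\R_{\geq 0})^{n+1}$, I apply Lemma~\ref{grad} to $\I$, $\tilde w$, and $\bar p$. Nonsingularity supplies polynomials $\f_1,\ldots,\f_m \in \I$, with $m = \codim \V_\C(\In_{\tilde w}(\I))$, such that $\In_{\tilde w}(\f_1),\ldots,\In_{\tilde w}(\f_m)$ generate $\In_{\tilde w}(\I)$ and have linearly independent gradients at $\bar p$. The lemma then produces $c_0>0$ and a family $\{\bar x_c\}_{c>c_0}$ with $c^{\tilde w}\cdot \bar x_c \in \V_\R(\I)$ and $\bar x_c \to \bar p$ as $c\to\infty$.

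Finally I dehomogenize. Write $\bar x_c = (x_{c,0},x_{c,1},\ldots,x_{c,n})$. Since $x_{c,0}\to 1$, for all sufficiently large $c$ the coordinate $x_{c,0}$ is nonzero, and dividing the homogeneous point $c^{\tilde w}\cdot \bar x_c \in \V_\R(\I)$ by its zeroth coordinate $c^s x_{c,0}$ yields a point on the affine patch $\{x_0=1\}$ that lies in $\V_\R(I)$. Explicitly, setting $x_c := (x_{c,1}/x_{c,0},\ldots,x_{c,n}/x_{c,0})$ gives $c^w\cdot x_c \in \V_\R(I)$ and $x_c\to p$, as desired. The main technical obstacle is justifying that nonsingularity of $\bar p$ yields actual polynomial generators of $\In_{\tilde w}(\I)$ — not merely of its radical or of a local ideal at $\bar p$ — whose gradients at $\bar p$ are linearly independent, since Lemma~\ref{grad} requires both properties simultaneously; this can be handled by noting that at a nonsingular point the ideal is locally a complete intersection, so starting from an arbitrary $w$-Gröbner basis and selecting $m$ elements whose initial forms' gradients span the normal space at $\bar p$ suffices.
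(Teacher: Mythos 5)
Your homogenization step (shifting the weight by a multiple of $(1,\ldots,1)$ to land in $(\R_{\geq 0})^{n+1}$, lifting $p$ to $(1,p)$, and dehomogenizing at the end by dividing by the zeroth coordinate) is correct and is exactly how the paper reduces general $w$ to nonnegative $w$. The problem is the step you yourself flag as ``the main technical obstacle'': the claim that selecting $m$ elements of a $\tilde w$-Gr\"obner basis whose initial forms have gradients spanning the normal space at $\bar p$ suffices to invoke Lemma~\ref{grad}. It does not. Lemma~\ref{grad} hypothesizes that $\In_{\tilde w}(\I) = \langle \In_{\tilde w}(\f_1),\ldots,\In_{\tilde w}(\f_m)\rangle$ as an ideal, i.e.\ that these $m$ elements by themselves form a $\tilde w$-Gr\"obner basis, and its proof uses this immediately to conclude $\I = \langle \f_1,\ldots,\f_m\rangle$. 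Nonsingularity at $\bar p$ only gives you $m$ elements whose initial forms generate the \emph{localization} of the initial ideal at $\bar p$; globally their initial forms may generate a strictly smaller ideal (e.g.\ when $\V(\In_{\tilde w}(\I))$ has components not passing through $\bar p$). This is not a cosmetic issue: the Poincar\'e--Miranda argument inside Lemma~\ref{grad} produces only a common zero of $f_1,\ldots,f_m$, and such a point belongs to $\V_{\R}(I)$ only because those $m$ polynomials generate $I$. With merely local generation, the zero you construct could lie on $\V_{\R}(\langle f_1,\ldots,f_m\rangle)\setminus \V_{\R}(I)$, and the conclusion fails.

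The paper closes this gap with an auxiliary ideal. It extends $f_1,\ldots,f_m$ to a full set of generators $f_1,\ldots,f_r$ of the initial ideal, chooses a $w$-homogeneous $h$ with $h(p)\neq 0$ and $h\cdot\In_w(f_k)\in\langle\In_w(f_1),\ldots,\In_w(f_m)\rangle$ for $k>m$ (possible precisely because the first $m$ initial forms generate the localization at $p$), and sets $I'=\langle f_1,\ldots,f_m,\,hf_{m+1},\ldots,hf_r\rangle$. One checks that $\{f_1,\ldots,f_m\}$ is a $w$-Gr\"obner basis for $I'$, so Lemma~\ref{grad} applies to $I'$ verbatim; since $\V_{\R}(I')\subseteq\V_{\R}(I)\cup\V_{\R}(h)$ and $h$ is $w$-homogeneous with $h(p)\neq 0$, the points $c^w\cdot x_c$ it produces avoid $\V_{\R}(h)$ for large $c$ and hence lie in $\V_{\R}(I)$. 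Your proof needs this (or an equivalent localization device) to be complete.
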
 
\begin{proof}
We will first assume $w \in (\R_{\geq 0})^n$ and generalize later on. Let $\In_w(I)_p$ be the localization of $\In_w(I)$ at $p$, that is, $\In_w(I)_p = \{ \frac{g}{h} \;:\; g\in \In_w(I), \;h(p)\neq0\}$.  
Because $\V_{\R}(\In_w(I))$ is nonsingular at $p$, $\In_w(I)_p$ is a complete intersection \cite[\S II.8]{Hart}. Thus we can find $f_1, \hdots, f_m \in I$  
so that $\In_w(f_1), \hdots, \In_w(f_m)$ generate $\In_w(I)_p$ where $\dim(\In_w(I)_p)=n-m$. Because $\In_w(I)_p$ is nonsingular, we also have that the vectors $\nabla\In_w(f_1)(p) \hdots, \nabla\In_w(f_m)(p)$ are linearly independent. 
Now extend these to generators $\In_w(f_1), \hdots, \In_w(f_m), \In_w(f_{m+1}), \hdots, \In_w(f_r)$ of the ideal $\In_w(I)$, with $f_{m+1},\hdots, f_r \in I$. Because $\In_w(I)_p = \langle \In_w(f_1), \hdots, \In_w(f_m)\rangle$, there is a $w$-homogeneous $h$ so that $h(p)\neq0$ and $h\cdot \In_w(f_k) \in \langle \In_w(f_1), \hdots, \In_w(f_m)\rangle$ for every $k=m+1, \hdots, r$. 
 
Consider the ideal $I' = \langle f_1, \hdots, f_m, h\cdot f_{m+1}, \hdots, h\cdot f_r\rangle$. Since $\{f_1, \hdots, f_r\}$ is a $w$-Gr\"obner basis for $I$  and $h$ is $w$-homogeneous, one can check that $\{f_1, \hdots, f_m, h\cdot f_{m+1}, \hdots, h \cdot f_r\}$ forms a $w$-Gr\"obner basis for $I'$. By construction, \[\In_w(h \cdot f_k) = h\cdot \In_w(f_k) \in \langle \In_w(f_1), \hdots, \In_w(f_m)\rangle\] for $k=m+1, \hdots, r$, so in fact $\{f_1, \hdots, f_m\}$ forms a $w$-Gr\"obner basis for $I'$.  To summarize, we have $\In_w(I') = \langle \In_w(f_1), \hdots, \In_w(f_m)\rangle$ and $\nabla\In_w(f_1)(p), \hdots, \nabla\In_w(f_m)(p)$ linearly independent. 

Using Lemma \ref{grad}, we have $c_0 \in \R_+$ and $\{x_c\}_{c\in (c_0, \infty)}$ with $\{c^w\cdot x_c\}_{c \in (c_0,\infty)} \subset \V_{\R}(I')$ and $x_c \rightarrow p$ as $c\rightarrow \infty$. Because $h(p)\neq 0$ and $h$ is $w$-homogeneous, for large enough $c$, $h(c^w\cdot x_c)=c^{\deg_w(h)}h(x_c)\neq 0$. As $c^w\cdot x_c \in \V_{\R}(I')$, which is contained in $\V_{\R}(I)\cup \V_{\R}(h)$, we see that $c^w\cdot x_c \in \V_{\R}(I)$. This proves Lemma \ref{R^n} in the case $w \in (\R_{\geq 0})^n$. 

Now consider an arbitrary vector $w \in \R^n$. 
For any $f \in \R[x_1, \hdots, x_n]$, let $\f(x_0, x_1, \hdots, x_n)$ denote its homogenization $(x_0)^{\deg(f)}f(x_1/x_0, \hdots, x_n/x_0)$, and $\I = \langle \f \;:\; f\in I \rangle$.  Then $\V_{\R}(I) = \V_{\R}(\I) \cap \{x_0 = 1\}$.
For $f \in \R[\underline{x}]$ there is some $d \in \N$ so that $\In_{(0,w)}(\f) = (x_0)^d \overline{\In_w(f)}$. Using this, we can see that since $p \in \V_{\R}(\In_w(I))$ is nonsingular, we have $(1,p)$ is nonsingular in $\V_{\R}(\In_{(0,w)}(\I))$. 

Choose $b \in \R_+$ so that $v:=(0,w)+b(1,\hdots, 1) \in (\R_{\geq 0})^{n+1}$. Since $\I$ is homogeneous, $\In_{(0,w)}(\I) = \In_v(\I)$ and we have that $(1,p)$ is nonsingular in $\V_{\R}(\In_v(\I))$. By the case $v \in (\R_{\geq 0})^{n+1}$ shown above, there exists $c_0 \in \R_+$ and $\{c^v\cdot y_c\}_{c\in(c_0, \infty)} \subset \V_{\R}(\I)$ so that $y_c \rightarrow (1,p)$ as $c \rightarrow \infty$.  As $\I$ is homogeneous and $c^v = c^bc^{(0,w)}$, we have that $\{c^{(0,w)}\cdot y_c\}_{c\in(c_0, \infty)} \subset \V_{\R}(\I)$. 
Because $y_c \rightarrow (1,p)$, by increasing $c_0$ if necessary we may assume that $(y_c)_0 \neq 0$ for all $c \in (c_0, \infty)$. This lets us scale $y_c$ to have first coordinate 1. Let $y_c' = (1/(y_c)_0)\cdot y_c := (1,x_c)$. Then $c^{(0,w)} \cdot y_c' = (1, c^w\cdot x_c) \in \V_{\R}(\I)$, giving us $c^w\cdot x_c \in \V_{\R}(I)$. In addition, as $(y_c)_0 \rightarrow 1$, we have that $x_c \rightarrow p$ as $ c \rightarrow \infty$. 
\end{proof}

Now we generalize to arbitrary semialgebraic sets using the notion of a PO-basis (page~\pageref{def:pobasis}):

\begin{lemma}\label{lim} Consider $g_1, \hdots, g_s \subset \R[\underline{x}]$, an ideal $I \subset \R[\underline{x}]$, and $w \in \R^n$. If $\In_w(g_1), \hdots, \In_w(g_s)$ form a preorder-basis with respect to $\In_w(I)$, then there is a Zariski-dense subset $U$ of $\V_{\R}(\In_w(I))$ so that for every $p \in U$, there exists $c_0 \in \R_+$ and $\{x_c\}_{c \in (c_0, \infty)} \subset \R^n$ with $\{c^w \cdot x_c\}_{c \in (c_0, \infty)} \subset K(\{g_1,\hdots, g_s\}\cup I)$ and $x_c \rightarrow p$ as $c \rightarrow \infty$. 
\end{lemma}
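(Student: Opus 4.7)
The plan is to take $U$ to be the set of nonsingular points $p$ of $V := \V_{\R}(\In_w(I))$ at which every $\In_w(g_i)(p) > 0$, apply Lemma \ref{R^n} at each such $p$ to produce the approximating sequence $\{x_c\}$ with $c^w \cdot x_c \in \V_{\R}(I)$ and $x_c \to p$, and then apply Lemma \ref{compact} to upgrade strict positivity of $\In_w(g_i)$ near $p$ to strict positivity of $g_i(c^w \cdot x_c)$ for $c$ large.

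First I would invoke Proposition \ref{PO basis}: the hypothesis that $\In_w(g_1), \ldots, \In_w(g_s)$ form a PO-basis with respect to $\In_w(I)$ simultaneously yields that $\In_w(I)$ is real radical and that $K_+ := \{p \in V : \In_w(g_i)(p) > 0 \text{ for all } i\}$ is Zariski-dense in $V$.

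Next I would argue that $U := K_+ \cap \Reg(V)$ is Zariski-dense in $V$. Real radicality of $\In_w(I)$ forces $V$ to be Zariski-dense in $\V_{\C}(\In_w(I))$, so the complex singular locus $\Sing(\V_{\C}(\In_w(I)))$ meets $V$ in a proper Zariski-closed subset. Decomposing $V$ into its $\R$-irreducible components $V_j$ and using irreducibility of each, Zariski-density of $K_+$ forces $K_+ \cap V_j$ to be Zariski-dense in $V_j$, and removing the proper Zariski-closed subset $\Sing \cap V_j$ preserves Zariski-density. Taking the union over $j$, $U$ is Zariski-dense in $V$.

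Finally, fix $p \in U$. Lemma \ref{R^n} supplies $c_0 \in \R_+$ and $\{x_c\}_{c > c_0}$ with $c^w \cdot x_c \in \V_{\R}(I)$ and $x_c \to p$. For each $i$, continuity of $\In_w(g_i)$ together with $\In_w(g_i)(p) > 0$ yields a closed Euclidean ball $S$ around $p$ on which every $\In_w(g_i)$ is strictly positive. Lemma \ref{compact} applied to each $\In_w(g_i)$ on $S$ then supplies a threshold $c_0^{(i)}$ with $g_i(c^w \cdot y) > 0$ for every $y \in S$ and $c > c_0^{(i)}$. Since $x_c \to p$, eventually $x_c \in S$; enlarging $c_0$ past $\max_i c_0^{(i)}$ and the threshold ensuring $x_c \in S$ yields $g_i(c^w \cdot x_c) > 0$ for all $i$, placing $c^w \cdot x_c$ in $K(\{g_1, \ldots, g_s\} \cup I)$. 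I expect the middle step to be the main obstacle: propagating Zariski-density of $K_+$ through removal of the complex singular locus requires using real-radicality carefully, since $V$ need not be equidimensional or irreducible, and nonsingularity is naturally a condition on the complex variety rather than on $V$ itself.
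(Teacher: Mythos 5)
Your proposal is correct and follows essentially the same route as the paper: invoke Proposition~\ref{PO basis} to get both real radicality of $\In_w(I)$ and Zariski-density of $K_+$, intersect $K_+$ with the nonsingular locus of $\V_{\R}(\In_w(I))$, then apply Lemma~\ref{R^n} and Lemma~\ref{compact}. The middle step you flag as delicate---showing $\Sing \cap V_j$ is proper closed in each real component $V_j$---is handled in the paper by observing that every minimal prime $\mathfrak{a}_i$ over the real radical ideal $\In_w(I)$ is itself real radical, so $\V_{\R}(\mathfrak{a}_i)$ is Zariski-dense in the irreducible variety $\V_{\C}(\mathfrak{a}_i)$ whose singular locus is proper closed; your component-by-component argument amounts to the same thing once this is made explicit.
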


\begin{proof} 
As $\In_w(I)$ is real radical, it is radical. So we can write $\In_w(I)$ as an intersection of primes ideals $\mathfrak{a}_i$; $\In_w(I) = \cap_i \mathfrak{a}_i$ where for all $j$, $\mathfrak{a}_j \not\subseteq \cap_{i\neq j} \mathfrak{a}_i$. Since $\In_w(I)$ is real radical, we have that $\mathfrak{a}_i$ is real radical for all $i$. 
By  \cite[Thm 1.5.3]{Hart}, the singular points of $\V_{\C}(\mathfrak{a}_i)$ form a proper Zariski-closed subset. Because $\mathfrak{a}_i$ is real radical, $\V_{\R}(\mathfrak{a}_i)$ is Zariski-dense in $\V_{\C}(\mathfrak{a}_i)$, so singular points of $\V_{\R}(\mathfrak{a}_i)$ form a proper Zariski-closed subset of $\V_{\R}(\mathfrak{a}_i)$. The nonsingular points of $\V_{\R}(\mathfrak{a}_i)$, denoted $\Reg(\V_{\R}(\mathfrak{a}_i))$, therefore form a nonempty Zariski-open subset of $\V_{\R}(\mathfrak{a}_i)$. 
This implies that $\Reg(\V_{\R}(\In_w(I)))$ forms a Zariski-open, Zariski-dense subset of $\V_{\R}(\In_w(I))$. 

Let $K^{in}_+ = \{p \in \V_{\R}(\In_w(I)) \;:\; \In_w(g_i)(p)>0 \text{ for }i=1, \hdots, s \}$. By Proposition~\ref{PO basis}, $K^{in}_+$ is Zariski dense in $\V_{\R}(\In_w(I))$. Thus $K_+^{in}\cap \Reg(\V_{\R}(\In_w(I))$ is Zariski-dense in $\V_{\R}(\In_w(I))$. Let $U$ denote this intersection, $K_+^{in}\cap \Reg(\V_{\R}(\In_w(I))$, and consider a point $p \in U$. 
By Lemma~\ref{R^n}, there exist $c_0 \in \R_+$ and $\{c^w \cdot x_c\}_{c\in (c_0,\infty)} \subset \V_{\R}(I)$ such that $x_c\rightarrow p$ as $c \rightarrow \infty$. Because $p \in K^{in}_+$, $\In_w(g_i)(p)>0$. As $x_c \rightarrow p$, for large enough $c$, $\In_w(g_i)(x_c) >0$ for all $i=1, \hdots, s$. Using Lemma~\ref{compact}, we can find a $c_0'>c_0$ so that for all $c> c_0'$, $g_i(c^w\cdot x_c)>0$ for $i=1, \hdots, s$.  Thus for $c > c_0'$, we have $c^w \cdot x_c \in \V_{\R}(I) \cap \{x\;:\; g_i(x)\geq 0 \;\forall\; i=1, \hdots, s\} 
= K(\{g_1, \hdots, g_s\}\cup I)$. 
 \end{proof}

We now prove the main theorem of this section by removing the coordinate hyperplanes.

\begin{proof}[Proof of Theorem \ref{zariski}]  Since $w\in \Trop(I)$,  the monomial $x_1 \cdots x_n \notin \In_w(I)$. As $\In_w(I)$ is real radical, we can conclude that $x_1 \cdots x_n$ does not vanish on $\V_{\R}(\In_w(I))$. Thus $\V_{\R}(\In_w(I))\cap \V_{\R}(x_1 \cdots x_n)$ is a proper Zariski-closed subset of $\V_{\R}(\In_w(I))$, meaning $\V_{\R^*}(\In_w(I))$ is a non-empty Zariski-open subset of $\V_{\R}(\In_w(I))$. 

Let $U \subseteq \V_{\R}(\In_w(I))$ as given by Lemma \ref{lim}. As $\V_{\R^*}(\In_w(I))$ is nonempty, Zariski-open in $\V_{\R}(\In_w(I))$, we have that $U \cap (\R^*)^n$ is nonempty.  
So let $p \in U \cap (\R^*)^n$. By Lemma \ref{lim}, there exists $c_0 \in \R_+$ and $\{x_c\}_{c \in (c_0, \infty)} \subset \R^n$ such that $\{c^w \cdot x_c\}_{c \in (c_0, \infty)}  \subset K(\{g_1, \hdots, g_s\}\cup I)$ and $x_c \rightarrow p$ as $c \rightarrow \infty$. By increasing $c_0$ if needed, we can take $\{x_c\}_{c \in (c_0, \infty)} \subset (\R^*)^n$. 

Taking logarithmic limits with $c=1/t$ gives 
\[ \lim_{t\rightarrow 0} \log_{1/t}((1/t)^w \cdot x_{1/t} ) \;\;=\;\;   w+ \lim_{t\rightarrow 0} \log_{1/t}( x_{1/t} )\;\; =\;\; w.\]
From this, we conclude that $w$ lies in the logarithmic limit set, $\LL(K_{\R^*})$, of the set $K_{\R^*}$ where $K_{\R^*} = \{x\in \V_{\R^*}(I)\;:\; g_1(x)\geq0, \hdots, g_s(x)\geq 0\}$. \end{proof}

\begin{cor}\label{trops}For an ideal $I \subseteq \R[\underline{x}]$, we have the chain of inclusions: \[\Trop_{\R {\rm ad}}(I)\subseteq \LL(\V_{\R^*}(I)) \subseteq \Trop_{\R^*}(I) \subseteq \Trop(I),\] where $\Trop_{\R {\rm ad}}(I) = |\Delta_{\R {\rm ad}}(I)| \cap \Trop(I).$\end{cor}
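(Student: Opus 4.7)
The plan is to establish the three inclusions in turn, each following quickly from results already in hand.

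For the inclusion $\Trop_{\R {\rm ad}}(I) \subseteq \LL(\V_{\R^*}(I))$, I would invoke Theorem \ref{zariski} with the empty list of inequality constraints. Given $w \in \Trop_{\R {\rm ad}}(I) = |\Delta_{\R {\rm ad}}(I)| \cap \Trop(I)$, I need $\In_w(I)$ real radical. Although the definition of $\Delta_{\R {\rm ad}}(I)$ only prescribes the real radical condition on relative interiors of cones, the subfan $\Delta_{\R {\rm ad}}(I)$ is closed under taking faces (noted in the discussion after Proposition \ref{subfan}), so every $w$ in the support lies in the relative interior of some cone of the subfan, and hence $\In_w(I)$ is real radical. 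The empty list of polynomials vacuously forms a PO-basis with respect to any real radical ideal, so Theorem \ref{zariski} applies with $K_{\R^*} = \V_{\R^*}(I)$ and delivers $w \in \LL(\V_{\R^*}(I))$.

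For the middle inclusion $\LL(\V_{\R^*}(I)) \subseteq \Trop_{\R^*}(I)$, I would cite Alessandrini \cite{log limit}, as was already done in the discussion before Theorem \ref{zariski}. The underlying idea is a pigeon-hole reduction to the positive-orthant case: a sequence $y(k) \in \V_{\R^*}(I)$ with $\log_{1/t(k)} |y(k)| \to w$ has infinitely many terms sharing a single sign pattern $\pi \in \{-1,1\}^n$, and then $\pi \cdot y(k) \in \V_{\R_+}(\pi \cdot I)$ with the same coordinate-wise absolute values, so $w \in \LL(\V_{\R_+}(\pi \cdot I)) \subseteq \Trop_{\R_+}(\pi \cdot I) \subseteq \Trop_{\R^*}(I)$.

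For the final inclusion $\Trop_{\R^*}(I) \subseteq \Trop(I)$, take any $w \in \Trop_{\R^*}(I)$, so $w \in \Trop_{\R_+}(\pi \cdot I)$ for some sign pattern $\pi$. By definition, $\In_w(\pi \cdot I)$ contains no nonzero polynomial with all-positive coefficients; in particular it contains no monomial. Since the map $x_i \mapsto \pi_i x_i$ is a ring automorphism of $\R[\underline{x}]$ sending $I$ to $\pi \cdot I$ and commuting with $\In_w$, this automorphism carries monomials to monomials (up to sign). Thus $\In_w(I)$ contains no monomial either, so $w \in \Trop(I)$.

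No step here is a serious obstacle; the only point requiring attention is the face-closure of $\Delta_{\R {\rm ad}}(I)$, which is what allows the first inclusion to hold on the full support of the subfan rather than only on relative interiors of its maximal cones.
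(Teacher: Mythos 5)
Your proposal is correct and follows essentially the same route as the paper's own (very terse) justification: the first inclusion is exactly Theorem \ref{zariski} applied with $\{g_1,\hdots,g_s\}=\emptyset$ (using that $\emptyset$ is a PO-basis precisely when the ideal is real radical), and the middle and last inclusions are quoted from Alessandrini and from the discussion of $\Trop_{\R^*}$ preceding the theorem. Your added care about points on boundaries of cones of $\Delta_{\R {\rm ad}}(I)$ is a detail the paper glosses over (and note its face-closure claim is stated only for homogeneous $I$), but it does not change the fact that your argument is the intended one.
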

All but the first inclusion are consequences of the  results in \cite{log limit} and \cite{real trop}, discussed above. Theorem \ref{zariski} with $\{g_1, \hdots,  g_s\} = \emptyset$  gives  $\Trop_{\R {\rm ad}} (I)\subseteq \LL(\V_{\R^*}(I))$. 

\subsection*{Example: Harmony} 
If an ideal has nice structure, we may see equality among our many different tropical constructions. For example, let $2 \leq d \leq n$ and consider the ideal $I_{d,n} \subset \R[x_{ij} \;:\; 1 \leq i \leq d, 1\leq j \leq n]$ generated by the determinants of the $\binom{n}{d}$ maximal minors of the $d \times n$ matrix 
\[	\begin{bmatrix} x_{11}&x_{12}&x_{13}&\hdots&x_{1n}\\ 
	 x_{21}&x_{22}&x_{23}&\hdots&x_{2n}\\ 
	\vdots&\vdots&\vdots&\ddots&\vdots\\
	 x_{d1}&x_{d2}&x_{d3}&\hdots&x_{dn}\\ 
\end{bmatrix}.\]
These polynomials form a \textbf{universal Gr\"obner basis} for $I_{d,n}$, by \cite{minors}, meaning that they form a $w$-Gr\"obner basis for every $w\in \R^n$. 

Any term of one of these polynomials is a square-free monomial with coefficient $\pm 1$. 
Every monomial initial ideal of $I_{d,n}$ is generated by square-free monomials and thus is real radical.   
Any initial ideal of $I_{d,n}$ has a further initial ideal that is monomial, so by Proposition~\ref{subfan}, every initial ideal of $I_{d,n}$ is real radical. That is, $|\Delta_{\R {\rm ad}} (I_{d,n})|= \R^n$. Intersecting with $\Trop(I_{d,n})$ and using Corollary \ref{trops} gives
\[\Trop_{\R {\rm ad}}(I_{d,n})= \LL(\V_{\R^*}(I_{d,n}))= \Trop_{\R^*}(I_{d,n}) =\Trop(I_{d,n}).\]

\subsection*{Example: Dissonance}\label{dissonance}
Unlike the positive tropical variety, the logarithmic limit set $\LL(\V_{\R^*}(I))$ cannot be characterized solely in terms of initial ideals. Consider $I = \langle (x-y-z)^4+(x-y-1)^2\rangle \subset \R[x,y,z]$. For this example, we'll demonstrate that each of the inclusions in Corollary \ref{trops} are strict: 
\[\Trop_{\R {\rm ad}}(I) \subsetneq \LL(\V_{\R^*}(I)) \subsetneq \Trop_{\R^*}(I) \subsetneq \Trop(I).\]

Because $I$ is principal, the tropical variety is given by the dual fan of the Newton polytope of $(x-y-z)^4+(x-y-1)^2$. Thus it is the union of the rays  $r_0 = (1,1,1)$, $r_1 = (0,0,-1)$, $r_2 = (0,-1,0)$, $r_3 = (-1,0,0)$ and the six 2-dimensional cones spanned by pairs of these rays. We'll use $\sigma_{ij}$ to denote the cone spanned by $r_i$ and $r_j$.  The corresponding monomial-free initial ideals are:
\begin{center}
\begin{tabular}{c|l}
$\sigma$ & $\In_{\sigma}(I)$\\
\hline 
$\underline{0}$ & $(x-y-z)^4+(x-y-1)^2$\\
$r_0$ & $(x-y-z)^4$\\
$r_1$ & $(x-y)^4 +(x-y-1)^2$\\
$r_2$ &$(x-z)^4+(x-1)^2$\\
$r_3$ &  $(y+z)^4 +(y-1)^2$\\
$\;$
\end{tabular} \hspace{2cm}
\begin{tabular}{c|l}
$\sigma$ & $\In_{\sigma}(I)$\\
\hline 
$\sigma_{01}$ & $(x-y)^4$\\
$\sigma_{02}$ & $(x-z)^4$\\
$\sigma_{03}$ & $(y+z)^4$\\
$\sigma_{12}$ & $x^4 +(x-1)^2$\\
$\sigma_{13}$ &  $y^4 +(y-1)^2$\\
$\sigma_{23}$ &$z^4+1$\\
\end{tabular}
\end{center}

To calculate $\Trop_{\R^*}(I)$, we use a theorem of Einsiedler and Tuncel \cite{ET} regarding the positive tropical variety, which easily extends to the following: 
\begin{prop}[\cite{ET}]\label{ET}  Fix an ideal $I \subset \R[\underline{x}]$. A vector $w\in \R^n$ lies in the real tropical variety of $I$, $\Trop_{\R^*}(I)$, if and only if there exists $v\in \R^n$ with $\V_{\R^*} (\In_v(\In_wI)) \neq \emptyset$.\end{prop}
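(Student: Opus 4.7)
The plan is to reduce the claim to the positive-tropical version proved by Einsiedler and Tuncel, whose statement (applied to any ideal $J$) asserts that $w\in \Trop_{\R_+}(J)$ if and only if there exists $v\in\R^n$ with $\V_{\R_+}(\In_v(\In_w J))\neq\emptyset$. By definition $\Trop_{\R^*}(I) = \bigcup_{\pi\in\{-1,1\}^n}\Trop_{\R_+}(\pi\cdot I)$, so $w\in\Trop_{\R^*}(I)$ if and only if $w\in\Trop_{\R_+}(\pi\cdot I)$ for some sign pattern $\pi$. I would first apply the cited theorem to each $\pi\cdot I$ separately to translate this into the existence of some $v$ and some $\pi$ with $\V_{\R_+}(\In_v(\In_w(\pi\cdot I)))\neq\emptyset$.

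Next I would verify a commutation lemma: diagonal sign changes commute with taking initial forms. For $f = \sum_a f_a\underline{x}^a$, the substitution $f(\pi\cdot\underline{x}) = \sum_a f_a\pi^a\underline{x}^a$, where $\pi^a := \prod_i \pi_i^{a_i}\in\{\pm 1\}$, alters only the signs of coefficients and does not change the support of $f$. Therefore $\deg_w$ is preserved and $\In_w(f(\pi\cdot\underline{x})) = (\In_w f)(\pi\cdot\underline{x})$. Passing to ideals (using that $\pi \cdot I$ is itself an ideal, since $x \mapsto \pi \cdot x$ is an invertible linear substitution) gives $\In_w(\pi\cdot I) = \pi\cdot\In_w(I)$, and iterating yields $\In_v(\In_w(\pi\cdot I)) = \pi\cdot\In_v(\In_w I)$.

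Finally, I would observe that the map $x\mapsto\pi\cdot x$ is a bijection of $(\R^*)^n$ carrying $(\R_+)^n$ onto the orthant indexed by $\pi$. Consequently, for any ideal $J$, $\V_{\R_+}(\pi\cdot J)$ is nonempty precisely when $\V_{\R^*}(J)$ meets the $\pi$-orthant. Taking the union over all $\pi\in\{-1,1\}^n$ then shows that $\V_{\R_+}(\In_v(\In_w(\pi\cdot I)))$ is nonempty for some $\pi$ if and only if $\V_{\R^*}(\In_v(\In_w I))\neq\emptyset$. Stitching this together with the two reductions above yields the desired biconditional.

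I do not anticipate a serious obstacle: the substantive content is carried by the cited Einsiedler--Tuncel theorem, while the $\R_+$-to-$\R^*$ extension is purely formal, resting on the routine fact that initial ideals are equivariant under diagonal sign changes. The only points worth double-checking are that $\pi\cdot I$ is a genuine ideal (so the hypothesis of \cite{ET} applies) and that the commutation identity holds at the level of generators and hence extends to the ideal itself.
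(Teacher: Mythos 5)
Your proposal is correct and supplies precisely the argument the paper treats as immediate: the paper merely notes that the Einsiedler--Tuncel theorem ``easily extends to the following'' without spelling it out. You have filled in that elision in the natural way, using the defining union $\Trop_{\R^*}(I) = \bigcup_\pi \Trop_{\R_+}(\pi\cdot I)$, the sign-change equivariance $\In_w(\pi\cdot I) = \pi\cdot\In_w(I)$ (valid since $x\mapsto\pi\cdot x$ is a ring automorphism, hence commutes with taking generated ideals and preserves supports and $w$-degrees), and the orthant decomposition $\V_{\R^*}(J)\neq\emptyset \Leftrightarrow \exists\pi:\V_{\R_+}(\pi\cdot J)\neq\emptyset$. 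The quantifier interchange between $\exists\pi$ and $\exists v$ is harmless because both are existential, so the biconditional chain closes cleanly.
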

Given $v, w \in \R^n$, for small enough $\epsilon >0$, $\In_v(\In_w I) = \In_{(w+\epsilon v)}(I)$. For a cone $\sigma \in \Delta_{Gr}$, let $\In_{\sigma}(I)$ denote $\In_w(I)$ for $w$ in the relative interior of $\sigma$. We can view $\In_{\sigma_{ij}}(I)$ as an initial ideal of $\In_{r_i}(I)$. To understand $\Trop_{\R^*}(I)$ we'll first look at the maximal cones, $\sigma_{ij}$. As each of their further initial ideals are monomial, we see that $\Trop_{\R^*}(I)$ contains $\sigma_{ij}$ if and only if the variety  $\V_{\R^*}(\In_{\sigma_{ij}}(I))$ is nonempty.  
Thus the two-dimensional cones of $\Trop_{\R^*}(I)$ are $\sigma_{01}, \sigma_{02}$, and $ \sigma_{03}$. Because each of the rays 
$r_0, r_1, r_2, r_3$ are contained in one of these cones, they are each in $\Trop_{\R^*}(I)$ as well. 

To calculate $\LL(\V_{\R^*}(I))$, note that the real variety of $I$ is $\{(x,y,z) \;:\; x-y=1,\;z=1 \}$. Because the real variety is contained in the plane $\{z=1\}$, we see that $\LL(\V_{\R^*}(I))$ is contained in the plane $\{z=0\}$. The intersection of $\Trop_{\R^*}(I)$ with the plane $\{z=0\}$ is the union of the three rays, $r_2, r_3$ and $(1,1,0)\subset \sigma_{03}$. The sequences $(1+t, t, 1)$, $(t,-1+t, 1)$, and $(1/t +1, 1/t,1)$ respectively verify that each of these rays is in $\LL(\V_{\R^*}(I))$. So  $\LL(\V_{\R^*}(I))$ is the union of the rays $r_2, r_3$ and $(1,1,0)$. Note that in this case, $\LL(\V_{\R^*}(I))$ is not a subfan of $\Trop_{\R^*}(I)$.

Finally, by the table listed above we see that $\Trop_{\R {\rm ad}}(I)$ is empty. To summarize, 

\begin{center}
\begin{tabular}{ll}
$\Trop_{\R {\rm ad}}(I)$ & $=\emptyset$\\
$\LL(\V_{\R^*}(I))$ & $= r_2 \cup r_3 \cup (1,1,0)$\\
$\Trop_{\R^*}(I) $ & $= \sigma_{01}\cup \sigma_{02} \cup \sigma_{03} $\\
$\Trop(I)$ & $= \sigma_{01}\cup \sigma_{02} \cup \sigma_{03} \cup \sigma_{12}\cup \sigma_{13} \cup \sigma_{23} $,\\
\end{tabular}\end{center}$\;$\\
and indeed, $\Trop_{\R {\rm ad}}(I) \subsetneq \LL(\V_{\R^*}(I)) \subsetneq \Trop_{\R^*}(I) \subsetneq \Trop(I)$.

\begin{remark}\label{rem:primDecop} Lemma~\ref{R^n} gives a stronger inner approximation of $\LL(\V_{\R^*}(I))$. We can characterize this algebraically as follows. Let $\In_w(I) = \cap_i \mathfrak{a}_i$
be the primary decomposition of $\In_w(I)$, see for example \cite[\S 3.3]{comm alg}. There exists a nonsingular point $p\in \V_{\R}(\In_w(I))$ if and only if for some $i$, $\mathfrak{a}_i$ is real radical \cite[Thm. 12.6.1]{PPSS}. If in addition, $\mathfrak{a}_i$ does not contain the monomial $x_1\hdots x_n$, 
then there is such a point in $(\R^*)^n$ and $w \in \LL(\V_{\R^*}(I))$. Thus in Corollary~\ref{trops}, we may replace $\Trop_{\R {\rm ad}}(I)$ with $\{w\in \Trop(I)\;:\; \In_w(I) \text{ has a real radical primary component}\}.$ See Ex.~\ref{ex:primedec} for such a computation.\end{remark}
\section{Stability of sums of squares modulo an ideal}\label{sec:stable}

Sums of squares of polynomials are essential in real algebraic geometry. They are also made computationally tractable by methods in semidefinite programming. For an introduction to real algebraic geometry and its relation to sums of squares and semidefinite programming, see \cite{PPSS}. Many semidefinite optimization problems involve quadratic modules and their corresponding semialgebraic sets. 
Given a quadratic module $P = QM(g_1, \hdots, g_s)$ and semialgebraic set $K=K(P)$, there are some natural questions we can ask about quality of the approximation of $P$ to $\{f \in \R[\underline{x}] \;:\; f\geq 0 \text{ on } K\}$.
\begin{enumerate}
\item Does $P$ contain all $f$ that are positive on $K$?
\item Given $f \in P$, can we bound the degree of $\sigma_i \in \sum \R[\underline{x}]^2$ needed to represent $f = \sigma_0+ \sum_{i=1}^s \sigma_i g_i$ in terms of $\deg(f)$ only?
\end{enumerate}

We call a quadratic module \textit{stable} if the answer to (2) is yes. That is, 
\begin{definition} A quadratic module $P = QM(g_1, \hdots, g_s)\subset \R[\underline{x}]$ is \textbf{stable} if there exists a function $l:\mathbb{N}\rightarrow \mathbb{N}$ so that for all $f\in P$, there exist $\sigma_i \in \sum \R[\underline{x}]^2$ so that $f = \sum_{i}\sigma_ig_i$ where $g_0=1$, and for each $i$, $\deg(\sigma_ig_i) \leq l(\deg(f))$.
\end{definition}

For preorders, the answers to (1) and (2) are related and both heavily depend on the geometry of $K$. 
Scheiderer shows that for $K$ with dimension greater than one, a positive answer to (1) implies that $P$ is not stable \cite{Sch}. A complementary result of Schm\"udgen states that if $K(P)$ is compact, then $P$ contains any polynomial that is strictly positive on $K$ \cite{Schmudgen}. Thus for $K$ compact of dimension greater than one, the preorder $P$ is not stable. 

\begin{example} \label{ex:unstable}Consider the preorder $P = \sum \R[x,y,z]^2 +\langle x^4-x^3+y^2+z^2 \rangle$. Since the variety of $x^4-x^3+y^2+z^2$ is compact and two-dimensional, the preorder $P$ is not stable. Indeed, we can see that by Schm\"udgen's Theorem, $P$ must contain $x+\epsilon$ for every $\epsilon >0$. By inspection one can check that $x \notin P$.  For real radical ideals $I$, such as our example, and given $d$, the set $\{\sum_i g_i^2 +I \;:\; \deg(g_i)\leq d\}$ is a closed subset of the $\R$-vector space $\{f +I \;:\; \deg(f)\leq 2d\} \subset \R[\underline{x}] / I$, \cite[Lemma 4.1.4]{PPSS}. 
Thus there can be no $d$ for which $x+ \epsilon \in \{\sum_i g_i^2 +\langle x^4-x^3+y^2+z^2 \rangle\;:\; \deg(g_i)\leq d\}$ for every $\epsilon$ in a positive neighborhood of $0$. The degrees of polynomials verifying $x+\epsilon \in P$ must be unbounded as $\epsilon \rightarrow 0$.
\end{example}

In what follows, we give sufficient conditions to avoid this unstable behavior. The rest of this section will address question (2), but as we see from \cite{Sch} and \cite{Schmudgen} this has implications for the compactness of $K$ and the answer to (1).
In practice checking the stability of a quadratic module $P$ is difficult. Netzer gives tractable sufficient conditions when $K$ is full dimensional \cite{Netzer}. Here we present a complimentary result: sufficient conditions for stability in the case $K \subseteq \V_{\R}(I)$,  $P = PO(g_1, \hdots, g_s;I)$.

\begin{thm} Let $g_1, \hdots, g_s \in \R[\underline{x}]$ and $I \subset \R[\underline{x}]$ be a ideal.  If there exists $w \in (\mathbb{R}_{>0})^n$ so that $\In_w(I)$ is real radical and $\In_w(g_1), \hdots, \In_w(g_s)$ is a quadratic-module basis with respect to $\In_w(I)$, then $QM(g_1, \hdots, g_s;I)$ is stable.  \label{stable}
\end{thm}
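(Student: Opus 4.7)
The plan is to follow the template of Proposition~\ref{subfan}, with the QM-basis hypothesis playing the role of real radicality. Given $f \in P := QM(g_1,\hdots,g_s;I) = QM(g_1,\hdots,g_s)+I$, I will write
\[ f = \sigma_0 + \sum_{i=1}^s g_i\sigma_i + h, \qquad \sigma_i = \sum_j y_{ij}^2,\; h \in I, \]
and, with $g_0 := 1$, set
\[ D := \max\bigl(\{\deg_w(g_i y_{ij}^2)\}_{i\ge 0,j} \cup \{\deg_w(h)\}\bigr). \]
The aim is to produce a representation with $D \le \deg_w(f)$. Since $w \in (\R_{>0})^n$, writing $w_{\min}$, $w_{\max}$ for the extreme entries of $w$, every polynomial $p$ satisfies $w_{\min}\deg(p) \le \deg_w(p) \le w_{\max}\deg(p)$, so such a bound immediately yields $\deg(g_i\sigma_i) \le (w_{\max}/w_{\min})\deg(f)$ and proves stability with $l(d) = \lceil (w_{\max}/w_{\min})\,d\rceil$.

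Fix a representation of $f$ minimizing $D$ and suppose, for contradiction, that $D > \deg_w(f)$. Extracting the degree-$D$ $w$-homogeneous component of $\sigma_0 + \sum g_i\sigma_i + h$ (which vanishes, since $\deg_w(f) < D$) yields
\[ \sum_{(i,j)\in\mathcal{A}} \In_w(g_i)\In_w(y_{ij})^2 + \eta = 0, \qquad \mathcal{A} := \{(i,j) : \deg_w(g_i y_{ij}^2) = D\}, \]
where $\eta$ is the degree-$D$ component of $h$ (equal to $\In_w(h)$ when $\deg_w(h)=D$, and to $0$ otherwise). In either case $\eta \in \In_w(I)$, so $\sum_{(i,j)\in\mathcal{A}} \In_w(g_i)\In_w(y_{ij})^2 \in \In_w(I)$. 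Applying the QM-basis hypothesis on $\In_w(g_1),\hdots,\In_w(g_s)$ with respect to $\In_w(I)$ then forces $\In_w(y_{ij}) \in \In_w(I)$ for every $(i,j) \in \mathcal{A}$.

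Lift each such $\In_w(y_{ij})$ to $z_{ij} \in I$ with $\In_w(z_{ij}) = \In_w(y_{ij})$, set $y_{ij}' := y_{ij}-z_{ij}$ for $(i,j)\in\mathcal{A}$ and $y_{ij}' := y_{ij}$ otherwise, and form
\[ \sigma_i' := \sum_j (y_{ij}')^2, \qquad h' := h + \sum_{(i,j)\in\mathcal{A}} g_i\,z_{ij}(2y_{ij}-z_{ij}) \in I. \]
The identity $y_{ij}^2 - (y_{ij}')^2 = z_{ij}(2y_{ij}-z_{ij}) \in I$ gives $f = \sigma_0' + \sum g_i\sigma_i' + h'$, and by construction $\deg_w(g_i(y_{ij}')^2) < D$ for every $(i,j)$. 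The main obstacle is verifying $\deg_w(h') < D$: each correction $g_i z_{ij}(2y_{ij}-z_{ij})$ has $w$-degree at most $D$, with degree-$D$ leading part equal to $\In_w(g_i)\In_w(y_{ij})^2$, using $\In_w(z_{ij})=\In_w(y_{ij})$ to conclude $\In_w(2y_{ij}-z_{ij}) = \In_w(y_{ij})$. Summing over $\mathcal{A}$, the degree-$D$ leading part of the total correction equals $-\eta$ by the displayed vanishing relation, exactly cancelling the degree-$D$ part of $h$ inside $h'$. Hence $\deg_w(h') < D$, contradicting the minimality of $D$, so $D \le \deg_w(f)$ and stability follows by the reduction above.
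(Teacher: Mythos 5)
Your argument is correct and is essentially the paper's own proof: the same descent on $\deg_w$ (phrased as a minimal counterexample rather than an iteration) using the QM-basis hypothesis to force $\In_w(y_{ij}) \in \In_w(I)$ and the same lifting/cancellation of top terms as in Lemma~\ref{low degree}, followed by the same comparison $w_{\min}\deg \le \deg_w \le w_{\max}\deg$, which is exactly where $w \in (\R_{>0})^n$ is needed. The only step you elide is turning the degree bound on $h' \in I$ into a bounded-degree representation in terms of generators of $I$; the paper absorbs this by reducing to stability of $QM(g_1,\hdots,g_s)$ in $\R[\underline{x}]/I$ via \cite[\S 4.1]{PPSS}.
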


\begin{lemma} 
Under the hypotheses of Theorem \ref{stable}, for any $f \in QM(g_1, \hdots, g_s;I)$, there exist $\sigma_i \in \sum\R[\underline{x}]^2$ with $f \equiv \sum_{i=0}^s\sigma_ig_i$ (mod $I$), where $g_0=1$,  and $ \max_i\{\deg_w(\sigma_ig_i)\}\leq \deg_w(f).$ \label{low degree}
\end{lemma}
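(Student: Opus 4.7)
Plan: I would mimic the descent argument used in the proof of Proposition~\ref{subfan}. Begin with any representation $f = \sum_{i,j} g_i y_{ij}^2 + h$ where $h \in I$, $y_{ij} \in \R[\underline{x}]$, and $g_0 := 1$. Set $D := \max_{i,j}\{\deg_w(g_i y_{ij}^2)\}$ and let $\A = \{(i,j) : \deg_w(g_i y_{ij}^2) = D\}$. If $D \le \deg_w(f)$ there is nothing to prove, so assume $D > \deg_w(f)$; the goal is to produce another representation of the same shape with strictly smaller $D$.

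Match the $w$-degree $D$ parts of both sides of $f - h = \sum_{i,j} g_i y_{ij}^2$. The RHS contributes $\sum_{(i,j)\in\A}\In_w(g_i)\In_w(y_{ij})^2$ in this degree; crucially there is no cancellation within each product since squares of nonzero polynomials are nonzero, giving $\In_w(g_i y_{ij}^2) = \In_w(g_i)\In_w(y_{ij})^2$. The LHS contributes $-h_D$, where $h_D$ is the $w$-degree $D$ part of $h$ (the case $\deg_w(h) > D$ is ruled out, because then $\In_w(h) \neq 0$ would have no degree-matching partner on the RHS, since $\deg_w(f) < D$ as well). Hence $h_D$ is either $\In_w(h)$ or $0$, and in either case $h_D \in \In_w(I)$, so
\[ \sum_{(i,j) \in \A} \In_w(g_i) \In_w(y_{ij})^2 \;=\; -h_D \;\in\; \In_w(I). \]
Because $\In_w(g_1), \hdots, \In_w(g_s)$ is a QM-basis with respect to $\In_w(I)$, this forces $\In_w(y_{ij}) \in \In_w(I)$ for every $(i,j) \in \A$.

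Now for each $(i,j) \in \A$, pick $h_{ij} \in I$ with $\In_w(h_{ij}) = \In_w(y_{ij})$, and set $y_{ij}' := y_{ij} - h_{ij}$, so $\deg_w(y_{ij}') < \deg_w(y_{ij})$. Expanding
\[ g_i (y_{ij}')^2 \;=\; g_i y_{ij}^2 \,-\, 2 g_i y_{ij} h_{ij} \,+\, g_i h_{ij}^2, \]
the last two summands lie in $I$; substituting into the original representation and absorbing them into a new $h' \in I$ gives a representation of $f$ with $(y_{ij}')$ replacing $(y_{ij})$ for $(i,j)\in\A$ and strictly smaller maximum $w$-degree $D'<D$. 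Since $w \in (\R_{>0})^n$ we have $w^T a \ge (\min_i w_i)\,|a|_1$, so $\{w^Ta : a \in \N^n\}$ is discrete and well-ordered; hence the set of possible values of $D$ is well-ordered, and the descent terminates after finitely many steps with $D \le \deg_w(f)$. Regrouping the $y_{ij}^2$ into $\sigma_i := \sum_j y_{ij}^2$ yields the desired representation. The main obstacle is the degree-matching step: one must both rule out $\deg_w(h) > D$ and argue that initial forms of the SOS pieces do not cancel into a lower-degree residue — both handled by the case analysis above and the fact that $\In_w(y_{ij})^2\ne0$.
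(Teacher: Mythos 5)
Your proof is correct and follows essentially the same descent argument as the paper: cancel the top-degree part, apply the QM-basis hypothesis to conclude $\In_w(y_{ij})\in\In_w(I)$ for the argmax terms, subtract representatives from $I$, and iterate. Your handling of the case $\deg_w(h)>D$ and your well-ordering justification for termination are slightly more explicit than the paper's, but the substance is identical.
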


\begin{proof}
Let $f \in \QM(g_1, \hdots, g_s;I)$. Then $f = \sum_{i=0}^s g_i\sum_j y_{ij}^2 +h$ for some $y_{ij} \in \R[\underline{x}]$ and $h\in I$. Let $d= \max_i\{\deg_w(g_iy_{ij}^2)\}$ and $\A = \{(i,j)\;:\; \deg_w(g_iy_{ij}^2) =d \}$.  Let $h'$ equal $h$ if $\deg_w(h)=d$ and 0 otherwise. Note that $h' \in I$ either way.  

Suppose $\deg_w(f)<d$.  This implies that top terms in the representation of $f$ must cancel, that is, $\sum_{(i,j)\in \A} \In_w(g_iy_{ij}^2)+\In_w(h')=0$. Since $\In_w(g_iy_{ij}^2) = \In_w(g_i)\In_w(y_{ij})^2$, we have
 \[\sum_{(i,j)\in \A} \In_w(g_i)\In_w(y_{ij})^2\;\;=\;\;-\In_w(h')\;\; \in \;\;\In_w(I).\]
Because $\In_w(g_1), \hdots, \In_w(g_s)$ is a QM-basis with respect to $\In_w(I)$, this implies $\In_w(y_{ij}) \in \In_w(I)$ for all $(i,j)\in \A$. Thus there is some $z_{ij}\in I$ so that $\In_w(z_{ij}) = \In_w(y_{ij})$. Let $\hat{y}_{ij} = y_{ij}-z_{ij}$ for $(i,j) \in \A$ and $\hat{y}_{ij}=y_{ij}$ for $(i,j) \notin \A$.  So $y_{ij} - \hat{y}_{ij} \in I$ for all $(i,j)$.  Then
\[f \;\;=\;\; \sum_i g_i \sum_j \hat{y}_{ij}^2 \;+\; h'' \]
where $h'' = h+\sum_{(i,j)\in \A} g_i\cdot (y_{ij}^2 -\hat{y}_{ij}^2) \in I$. Also, note that $\deg_w(\hat{y}_{ij}) <\deg_w(y_{ij})$ for all $(i,j) \in \A$. 
Then $\max_{(i,j)}\{\deg_w(g_i \hat{y}_{ij}^2)\}< d =\max_{(i,j)}\{\deg_w(g_i y_{ij}^2)\}$. If $\deg_w(f) < \max_{(i,j)}\{\deg_w(g_i \hat{y}_{ij}^2)\}$, then we repeat this process. Because the maximum degree drops each time and must be nonnegative, this process must terminate. This gives $f \equiv \sum_{i=0}^s \sigma_ig_i \text{ (mod } I)$ where $\deg_w(f) = \max_i\{\deg_w(\sigma_ig_i)\}$. 
 \end{proof}

\begin{proof}[Proof of Theorem \ref{stable}] As shown in \cite[\S 4.1]{PPSS}, $QM(g_1, \hdots, g_s;I)$ is stable in $\R[\underline{x}]$ if and only if $QM(g_1, \hdots, g_s)$ is stable in $\R[\underline{x}] / I$. Thus to show that  $QM(g_1, \hdots, g_s;I)$ is stable, it suffices to find $l:\mathbb{N} \rightarrow \mathbb{N}$ so that for every $f \in \sum QM(g_1, \hdots, g_s;I)$, there are $\sigma_i \in \sum\R[\underline{x}]^2$ with $f \equiv \sum_i\sigma_ig_i +I$ and $\deg(\sigma_ig_i) \leq l(\deg(f))$.

Let $f \in QM(g_1, \hdots, g_s;I)$. Let $\sigma_i$ be the polynomials given by Lemma \ref{low degree}. Note that for any $h \in \mathbb{R}[\underline{x}]$, 
\[ w_{min}\deg(h)\;\;\leq  \;\;\deg_w(h) \;\;\leq \;\;w_{max}\deg(h) ,\]
where $w_{max} = \max_i\{w_i\}$ and $w_{min} = \min_i\{w_i\}$.  Then
\[\;w_{min}\max_i\{\deg(\sigma_ig_i)\}\;\;\leq \;\;\max_i\{\deg_w(\sigma_ig_i)\} \;\;= \;\;\deg_w(f) \;\;\leq \;\;w_{max} \deg(f).\] 
So $ \max_i\{\deg(\sigma_ig_i)\} \leq \frac{w_{max}}{w_{min}} \cdot \deg(f).$
\end{proof}
The restriction $w\in (\R_{>0})^n$ cannot be weakened to $w\in (\R_{\geq 0})^n$. For example, consider the ideal $I$ of Example \ref{ex:unstable}. Since the ideal is real radical, the initial ideal with respect to the zero vector, $\In_{\underline{0}}(I)= I$, is real radical, but the preorder $\sum \R[\underline{x}]^2+I$ is not stable. 

\begin{cor}Let $I \subset \R[\underline{x}]$ be an ideal. If there exists $w  \in (\R_{>0})^n$ with $\In_w(I)$ real radical, then the preorder $\sum \R[\underline{x}]^2+I$ is stable. If $I$ is homogeneous and $\Delta_{\R {\rm ad}}(I) \neq \emptyset$, then $\sum \mathbb{R}[\underline{x}]^2 +I$ is stable.
\end{cor}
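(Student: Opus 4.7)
My plan is to view this corollary as a direct specialization of Theorem~\ref{stable} to the case $s=0$, where there are no inequality generators $g_i$, followed by a homogeneity trick for the second statement.

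For the first assertion, I would observe that the definition of a QM-basis with $s=0$ reduces to the implication
\[\sum_{j} y_{0j}^2 \in \In_w(I) \;\;\Longrightarrow\;\; y_{0j} \in \In_w(I) \;\;\forall j,\]
which is precisely the statement that $\In_w(I)$ is real radical (the sum-of-squares characterization of the real radical: any element whose $2m$-th power lies in a sum of squares plus the ideal is in the ideal, specialized here to $m=1$ and $f^2 = \sum y_j^2 - (\sum y_j^2 - f^2)$). Since the hypothesis gives such a $w \in (\R_{>0})^n$, Theorem~\ref{stable} applied with the empty set of $g_i$'s yields stability of $QM(\emptyset; I) = \sum \R[\underline{x}]^2 + I$.

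For the second assertion, I would use that $I$ is homogeneous to translate within the Gröbner fan. By hypothesis there is a cone $\sigma \in \Delta_{\R {\rm ad}}(I)$; pick any $w_0 \in {\rm relint}(\sigma)$, so that $\In_{w_0}(I)$ is real radical. Because $I$ is homogeneous, $\In_v(I) = \In_{v + t(1,\ldots,1)}(I)$ for every $t \in \R$, so choosing $t$ large enough gives $w := w_0 + t(1,\ldots,1) \in (\R_{>0})^n$ with $\In_w(I) = \In_{w_0}(I)$ still real radical. Applying the first assertion to this $w$ finishes the argument.

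There is no real obstacle here: both statements follow immediately once one notices that the empty QM-basis condition is literally the real radical property, and that the homogeneity of $I$ lets us freely translate the cone into the positive orthant without changing the initial ideal. The only subtlety worth flagging in the writeup is ensuring the reader sees why $s=0$ is allowed in Theorem~\ref{stable}, i.e.\ that the proof of Lemma~\ref{low degree} and Theorem~\ref{stable} go through verbatim with $g_0=1$ as the only generator.
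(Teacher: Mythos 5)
Your proposal is correct and follows essentially the same route as the paper: the first assertion is Theorem~\ref{stable} with $\{g_1,\hdots,g_s\}=\emptyset$ (using that the empty QM-basis condition is exactly real radicality, which the paper also notes), and the second uses the translation $\In_{w_0}(I)=\In_{w_0+t(1,\hdots,1)}(I)$ for homogeneous $I$ to move $w_0$ into $(\R_{>0})^n$.
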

\begin{proof} The statement for general ideals follows from Theorem \ref{stable} with $\{g_1, \hdots, g_s\}=\emptyset$. If an ideal $I \subset \R[\underline{x}]$ is homogenous, then for every $v \in \R^n$, there exists $w\in (\R_{>0})^n$ so that $\In_v(I) = \In_w(I)$. Thus $\Delta_{\R {\rm ad}}(I) \neq \emptyset$ if and only if there exists $w \in (\R_{>0})^n$ for which $\In_w(I)$ is real radical.  \end{proof}

\section{Connections to compactification}\label{previous}

The results of both sections are best understood by embedding varieties of $\C^n$ into the weighted projective space $\proj^{(1,w)}$, which we will explain here. 

Consider $\In_w(I)$ for an ideal $I$ and $w \in \R^n$. Because the Gr\"obner fan is a rational polyhedral fan, there exists a vector $v \in \Z^n$ so that $\In_w(I) = \In_v(I)$. Thus we may replace $w \in (\R_{\geq 0})^n$ with $w\in \N^n$.  For a vector $w \in \N^{n}$, weighted projective space $\proj^{(1,w)}$ as a set is $\C^{n+1} \backslash \{0\}$ modulo the equivalence $(a_0, a_1, \hdots, a_n) \sim (ta_0, t^{w_1}a_1, \hdots, t^{w_n}a_n)$ for all $t \in \C^*$. We'll use $[a_0:a_1:\hdots:a_n]$ to denote the equivalence class of $(a_0, a_1, \hdots, a_n) \in \C^{n+1} \backslash \{0\}$. Varieties in $\proj^{(1,w)}$ are defined by the zero sets of $(1,w)-$homogeneous polynomials in $\C[x_0, \hdots, x_n]$. 
The \textit{real points} of $\proj^{(1,w)}$ are the points in the image of $\R^{n+1}\backslash \{0\}$ under the equivalence relation. In other words, $a \in \proj^{(1,w)}$ is an element of $\proj^{(1,w)}_{\R}$ if $a = [b_0:\hdots:b_n]$ for some $(b_0, \hdots, b_n)\in \R^{n+1}$. 

We embed $\C^n$ into $\proj^{(1,w)}$ by $(a_1, \hdots, a_n) \mapsto [1:a_1:\hdots:a_n]$. 
Let $I \subset \R[x_1, \hdots, x_n]$ be an ideal, and let $V$ denote the image of $\V_{\C}(I)$ under this map. Let $V_{\R}$ denote the image of $\V_{\R}(I)$. Let $\overline{V}$ and $\overline{V_{\R}}$ denote the closures of $V$ and $V_{\R}$ in the Zariski topology on $\proj^{(1,w)}$. 
For $f \in \C[\underline{x}]$, let $\overline{f}^w(x_0,x_1, \hdots, x_n) = x_0^{deg_w(f)}f(x_1/x_0^{w_1}, \hdots, x_n/x_0^{w_n})$. Then $\overline{f}^w$ is $(1,w)$-homogeneous and $\overline{f}^w(0,x_1, \hdots, x_n)=\In_w(f)(x_1, \hdots, x_n)$. 
For an ideal $I \subset \R[\underline{x}]$, let $\overline{I}^w = \langle \overline{f}^w \;:\;f\in I\rangle$. We see that $\overline{V}$ is cut out by $\overline{I}^w$.

For every $ t\in \C$, define $\overline{I}^w(t) = \{f(t, x_1, \hdots, x_n) \;:\; f \in \overline{I}^w\}$.  
The boundary of our embedding of $\C^n$ into $\proj^{(1,w)}$ is given by $\{a_0=0\} \cong \proj^w$. Thus $\overline{V} \backslash V$ is cut out by $\overline{I}^w(0)$ in $\{a_0=0\} \cong \proj^w$. Since $\overline{f}^w(0,x_1, \hdots, x_n) = \In_w(f)$, we see that $\overline{I}^w(0) = \In_w(I)$. So $\overline{V} \backslash V$ is cut out by $\In_w(I)$ in $\{a_0=0\} \cong \proj^w$.

\begin{example} \label{ex:Vbar} Note that  $\overline{(V_{\R})} \subset (\overline{V})_{\R}$, but in general we do \textit{not} have equality. For example, consider $I = \langle (x-y)^4+x^2\rangle$ and $w = (1,1)$. Since $\V_{\R}(I) = \{(0,0)\}$, we have $V_{\R} = \{[1:0:0]\}$. But $\overline{V}$ is cut out by $(x-y)^4+x^2t^2=0$. This gives
\[\overline{(V_{\R})}  = \{[1:0:0]\}, \;\;\;\text{  and  }\;\;\; (\overline{V})_{\R} = \{[1:0:0] , [0:1:1]\}.\] We'll see that if $\In_w(I)$ is real radical, then $\overline{(V_{\R})} = (\overline{V}_{\R})$. \end{example}

Consider the preordering $P = PO(g_1, \hdots, g_s;I)$ and the semialgebraic set $K = K(P)$. Suppose $\In_w(g_1), \hdots, \In_w(g_s)$ form a PO-basis with respect to $\In_w(I)$.  Consider $U$,$\;p \in U$ and $\{x_c\}_{c \in (c_0, \infty)}$ as given by Lemma \ref{lim}. 

For $y \in \C^n$ and $a \in \C$, let $[a:y]$ denote $[a:y_1:\hdots:y_n]$ in $\proj^{(1,w)}$. By embedding $K$ into $\proj^{(1,w)}$, we see that 
\[[1:c^w\cdot x_c] \;= \;[c^{-1}:x_c] \;\in \;K.\]
As $c \rightarrow \infty$, we have $(c^{-1}, x_c) \rightarrow (0,p)$ in $\R^{n+1}$. Thus $[0:p] \in \overline{K} \subset \proj^{(1,w)}$, where $\overline{K}$ is the closure of $K$ in the Euclidean topology on $\proj^{(1,w)}$.
This shows that for every $p \in U$, $[0:p] \in \overline{K}$. 

Recall that $\overline{V} \backslash V$ is cut out by $\In_w(I)$ in $\{a_0=0\}\cong \proj^w$. As $U$ is Zariski-dense in $\V_{\R}(\In_w(I))$ and $\In_w(I)$ is real radical, we have that $U$ is Zariski-dense in $\V_{\C}(\In_w(I))$. Together with $[0:p] \in \overline{K}$ for all $p \in U$, this gives that $\overline{K} \backslash K$ is Zariski-dense in $\overline{V} \backslash V$. 

\begin{remark} \label{dense}The conditions for stability given in Theorem \ref{stable} imply that $\overline{K}\backslash K$ is Zariski-dense in $\overline{V} \backslash V$ when we embed $K$ and $V$ into $\proj^{(1,w)}$. \end{remark}
 This very closely resembles the conditions for stability given in \cite{SP}.
After introducing the notion of stability in \cite{SP}, Powers and Scheiderer give the following general sufficient condition for stability of a preorder. 

\begin{thm}[Thm. 2.14, \cite{SP}] \label{PS} Suppose $I \subset \R[\underline{x}]$ is a radical ideal and $V = \V_{\C}(I)$ is normal. Let $P$ be a finitely generated preorder with $K(P) \subseteq V_{\R}$. Assume that $V$ has an open embedding into a normal complete $\R$-variety $\overline{V}$ such that the following is true: For any irreducible component $Z$ of $\overline{V}\backslash V$,  the subset $\overline{K}\cap Z_{\R}$ of $Z_{\R}$ is Zariski dense in $Z$, where $\overline{K}$ denotes the closure of $K$ in $\overline{V}_{\R}$. Then the preorder $P$ is stable. 
\end{thm}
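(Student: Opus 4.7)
The plan is to adapt the degree-reduction technique of Theorem~\ref{stable} (Lemma~\ref{low degree}) by replacing the $w$-degree filtration with a pole-order filtration along the boundary $\overline{V}\setminus V$. Since $\overline{V}$ is normal and complete, each codimension-one component $Z$ of $\overline{V}\setminus V$ determines a discrete valuation $\nu_Z$ on the function field $\R(V)$, and components of higher codimension in $\overline{V}$ contribute nothing by normality. The valuations $\nu_Z$ are cofinal with the standard degree filtration on $\R[V]=\R[\underline{x}]/I$, so stability of $P$ reduces to bounding $\max_e \nu_Z(g^e\sigma_e)$ by $\nu_Z(f)$ uniformly over all $Z$ in any representation $f=\sum_e g^e\sigma_e$ with $\sigma_e\in\sum\R[V]^2$. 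Once such pole-order bounds are in hand, the closedness of bounded-degree sums of squares in a real radical quotient (as invoked in Example~\ref{ex:unstable}) converts them into an honest uniform degree bound $l(d)$.

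The heart of the argument is a leading-term cancellation. Fix a component $Z$, set $m:=\max_e \nu_Z(g^e\sigma_e)$, and suppose $m>\nu_Z(f)$. The $Z$-residues of the top-order summands must cancel:
\[\sum_{e\in\mathcal{A}} \overline{g^e}\cdot\overline{\sigma_e} \;=\; 0 \quad\text{in }\R[Z],\]
where $\mathcal{A}$ is the set of exponents attaining $m$ and $\overline{(\cdot)}$ denotes the residue taken via a uniformizer of $\nu_Z$. Normality of $\overline{V}$ along $Z$ ensures each $\overline{\sigma_e}$ is a sum of squares of functions regular on the smooth locus of $Z$. For any $p\in\overline{K}\cap Z_\R$ approached by a curve in $K$, every $\overline{g^e}(p)\geq 0$, so the nonnegative summands $\overline{g^e}(p)\overline{\sigma_e}(p)$ vanish individually. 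On the Zariski-dense subset of $Z$ where the $\overline{g^e}$ not identically vanishing on $Z$ are strictly positive, we conclude $\overline{\sigma_e}(p)=0$; by Zariski density this forces $\overline{\sigma_e}\equiv 0$ on $Z$, and a sum of squares of regular functions vanishes on a variety with Zariski-dense real points iff each square summand does. We may then subtract the corresponding top pole-order contribution from $\sigma_e$ and strictly decrease $m$.

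Iterating across all boundary components $Z$ produces, after finitely many reductions, a representation satisfying $\nu_Z(g^e\sigma_e)\leq \nu_Z(f)$ for every $Z$ and $e$; the linear comparison between $\deg$ and the valuations $\nu_Z$ (built into any projective embedding of $\overline{V}$) then yields stability with a linear degree bound $l(d)$. The main obstacle lies in making the residue step rigorous: one must justify that the leading term of a sum of squares with respect to $\nu_Z$ is again a sum of squares of functions regular on the smooth locus of $Z$ (an argument using normality of $\overline{V}$ at the generic point of $Z$ together with a careful choice of uniformizer), and one must handle the points of $\overline{K}\cap Z_\R$ where some $g_i$ vanishes — bypassed above by passing to the generic point of $Z$, which requires real radicality of the ideal of $Z$, itself a consequence of the Zariski density of $\overline{K}\cap Z_\R$ in $Z$.
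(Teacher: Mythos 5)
This statement is not proved in the paper at all: it is Theorem~2.14 of Powers--Scheiderer \cite{SP}, quoted verbatim so that the author can compare it with her Theorem~\ref{stable} (via Remark~\ref{dense}). So there is no in-paper proof to measure you against; what you have written is an attempted reconstruction of the argument from \cite{SP}, and its overall shape (divisorial valuations $\nu_Z$ on the boundary of a normal complete compactification, cancellation of leading residues forced by Zariski density of $\overline{K}\cap Z_{\R}$ and formal reality of $\R(Z)$) is indeed the right one.

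As a proof, though, the sketch has concrete gaps. First, the reduction step is misstated: unlike Lemma~\ref{low degree}, where $\In_w(y_{ij})\in\In_w(I)$ supplies an explicit $z_{ij}\in I$ to subtract, here you are working in the domain $\R[V]$ with nothing to absorb a correction into. The correct conclusion from $\sum_{e\in\mathcal{A}}\overline{g^e}\,\overline{\sigma_e}=0$ with each $\overline{g^e}$, $\overline{\sigma_e}$ a \emph{nonzero} residue is an outright contradiction (each product is nonzero and nonnegative on a dense subset of $Z_{\R}$, and $\R[Z]$ is a domain), so the pole order of every summand is automatically bounded by that of $f$ --- there is no ``subtract and iterate'' step, and your worry about termination across several components $Z$ is a symptom of setting the argument up wrong. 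Second, the sign analysis of $\overline{g^e}$ is waved at: the residue is the class of $u^{-\nu_Z(g^e)}g^e$ for a uniformizer $u$, and when $\nu_Z(g^e)$ is odd its sign on $Z_{\R}$ depends on the sign of $u$ on the part of $K$ approaching $Z$; this must be controlled (it is a genuine point in \cite{SP}). Third, the passage from ``pole order along every boundary divisor is bounded'' to an actual degree bound is not ``closedness of bounded-degree sums of squares''; it is the finite-dimensionality of $H^0(\overline{V},\mathcal{O}(\sum_Z k_Z Z))$ on the complete normal variety $\overline{V}$, together with the fact that $2\nu_Z(y_{ej})\geq\nu_Z(\sigma_e)$ (no leading-term cancellation among squares, again by formal reality of $\R(Z)$), which confines the $y_{ej}$ to a fixed finite-dimensional space. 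With these three points repaired your outline becomes the Powers--Scheiderer proof, but as written it does not yet close.
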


By Remark \ref{dense}, the conditions in Theorem \ref{stable} give a specific compactification that (mostly) satisfies the conditions of Theorem \ref{PS}, namely embedding $V$ into $\proj^{(1,w)}$. Note Theorem \ref{stable} has no normality requirements, and even when the original variety $V$ is normal, its compactification $\overline{V}$ might not be. While Theorem \ref{stable} is less general, it has the advantage of having no normality requirements and providing a concrete method of ensuring stability.

\section{Computation}\label{sec:comp}

In this section, we discuss tractable cases and possible methods for computing $\Delta_{\R {\rm ad}}$ and give sufficient conditions for the compactness and non-compactness of a real variety. 

To make use of Theorems \ref{zariski} and \ref{stable} for a given ideal $I \subset \R[\underline{x}]$, we need to verify that their hypotheses are satisfied. Thus we seek
to compute $\Trop_{\R {\rm ad}}(I)$ and determine whether the set $|\Delta_{\R {\rm ad}}(I)| \cap (\R_+)^n$ is nonempty. More generally, we would like to compute $\Delta_{\R {\rm ad}}(I)$.
Each of these tasks involves checking whether initial ideals are real radical. In general, checking whether an ideal is real radical is difficult, so calculating $\Delta_{\R {\rm ad}}(I)$ and $\Trop_{\R {\rm ad}}(I)$ will be difficult as well. Initial ideals often have simpler form, making it tractable to partially compute $\Delta_{\R {\rm ad}}(I)$. Here we will discuss some types of ideals for which these calculations are more tractable and present some general heuristics for computation in the general case. 

One case in which computing $\Delta_{\R {\rm ad}}(I)$ is more tractable is when $I$ is principal. Consider $I = \langle f \rangle.$ As in the general case, maximal cones of $\Delta_{Gr}(I)$ correspond to initial monomials of $f$ and belong to $\Delta_{\R {\rm ad}}(I)$ if and only if these monomials are square-free. Next we can consider cones of codimension one, which are dual to edges of the Newton polytope of $f$, $N\!P(f)$.  Because an edge is one-dimensional, the initial form to which it corresponds can be thought of as a polynomial in one variable. Specifically, suppose $a = (a_1, \hdots, a_n)$ and $b = (b_1, \hdots, b_n)$ are the vertices of an edge of $N\!P(f)$, with dual cone $\sigma \subset \Trop(f)$. Then the Newton polytope of $\In_{\sigma}(f)$ is the edge with endpoints $a$ and $b$.  If for some $i \in \{1,\hdots, n\}$ both $a_i \geq 2$ and $b_i \geq 2$, then $x_i^2$ divides $\In_{\sigma}(f)$, so $\sigma \notin \Delta_{\R {\rm ad}}(I)$. Otherwise, let $d = \gcd(|a_1-b_1|, \hdots, |a_n-b_n|)$, meaning that there are $d+1$ lattice points on the edge joining $a$ and $b$. Using $v = (a-b)/d$, for some $\gamma_k \in \R$  
we can write \[\In_{\sigma}(f) \;\;=\;\; \sum_{k=0}^d \gamma_k \underline{x}^{(b+k v)} \;\;=\;\; \underline{x}^{b}\sum_{k=0}^d \gamma_k (\underline{x}^v)^k.\]
Then $\sigma \in \Delta_{\R {\rm ad}}(I)$ if and only if the polynomial in one variable $\sum_{k=0}^d \gamma_k t^k$ is real radical, meaning that all of its roots are real and distinct. This can easily be checked using Sturm sequences \cite[\S2.2.2]{sturm}.  Similarly, we can check if a cone of codimension $d$ belongs to $\Delta_{\R {\rm ad}}(I)$ by checking whether or not a certain polynomial in $d$ variables is real radical, though this is harder when $d\neq 0,1$.  

If the ideal $I$ is binomial, then for all $w \in \Trop(I)$, we have $\In_w(I) = I$. Thus to understand $\Trop_{\R {\rm ad}}(I)$ it suffices to know whether or not $I$ is real radical. Becker et. al. \cite{binom rad} present a concrete algorithm for computing the real radical of a binomial ideal.
If $I$ is not real radical,  $\Trop_{\R {\rm ad}}(I) = \emptyset$. If $I$ is real radical, then $\Trop_{\R {\rm ad}}(I)=\LL(\V_{\R^*}(I)) =\Trop_{\R^*}(I) = \Trop(I)$. If in addition $(\R_+)^n \cap \Trop(I)$ is nonempty, then by Theorem \ref{stable} the preorder $\sum \R[\underline{x}]^2 +I$ is stable.

When $I$ is neither principal nor binomial, we can use more general heuristics for computing $\Delta_{\R {\rm ad}}(I)$, which can be specialized to $\Trop_{\R {\rm ad}}(I)$ or $|\Delta_{\R {\rm ad}}(I)| \cap (\R_+)^n$.

\begin{alg} Given an ideal $I \subset \R[\underline{x}]$, calculate the fan $\Delta_{\R {\rm ad}}(I)$ as follows:
\begin{enumerate}
 \item Calculate $\overline{I}$, the homogenization of $I$. 
 \item Use {\tt GFan} to calculate the Gr\"obner fan of $\overline{I}$, $\Delta_{Gr}(\overline{I})$.
 \item Intersect the cones of $\Delta_{Gr}(\overline{I})$ with $\{w_0=0\}$ to obtain $\Delta_{Gr}(I)$. 
\item For $i = n, n-1, \hdots, 0$ and cones $\sigma \in \Delta_{Gr}(I)$ of dimension $i$, check if $\In_{\sigma}(I)$ is real radical.
	\subitem if yes, $\sigma \in \Delta_{\R {\rm ad}}(I)$ and for all faces $\tau$ of $\sigma$, $\tau \in \Delta_{\R {\rm ad}}(I)$. 
	\subitem if no, $\;\sigma \notin \Delta_{\R {\rm ad}}(I)$.
\end{enumerate}\end{alg}

Full dimensional cones $\sigma \in \Delta_{Gr}(I)$ ($i=n$ in step 4) correspond to monomial initial ideals. A monomial ideal is real radical if and only if it is radical if and only if it is square free. More generally, one can calculate the radical of an ideal with Gr\"obner basis methods and exclude $\sigma$ from $\Delta_{\R {\rm ad}}(I)$ whenever $\In_{\sigma}(I)$ is not radical.  In general as the dimension of $\sigma \in \Delta_{Gr}(I)$ decreases, $\In_{\sigma}(I)$ has less structure and checking whether $\In_{\sigma}(I)$ is real radical becomes more difficult. 


Currently the only general methods of determining whether or not an ideal is real radical are not practical for computation.  
Becker and Neuhaus present an algorithm for computing the real radical of an ideal via quantifier elimination \cite{comp1, comp2}.
For $I = \langle f_1, \hdots, f_r \rangle \subseteq \R[x_1, \hdots, x_n]$ 
they show that the real radical of $I$, $\sqrt[\R]{I}$ is generated by polynomials of degree at most 
\[ \max_{i=1, \hdots, r} \{\deg(f_i) \} ^{2^{O(n^2)}}.\] 
This also provides a bound for the computation time of their algorithm for finding $\sqrt[\R]{I}$. 
The field of computational real algebraic geometry and semidefinite programming is progressing quickly, so it may soon be possible to efficiently check if an arbitrary ideal is real radical. \\

Corollary \ref{trops} also has consequences for the compactness of a real variety. Consider an ideal $I \subset \R[\underline{x}]$. By the definition of logarithmic limit sets, we have that $\V_{\R^*}(I)$ is compact and nonempty if and only if its logarithmic limit set is the origin,  $\LL(\V_{\R^*}(I)) =\{0\}$, and $\V_{\R^*}(I)$ is empty if and only if $\LL(\V_{\R^*}(I))$ is empty.  Corollary \ref{trops} shows that $\Trop_{\R {\rm ad}}(I)$ and $\Trop_{\R^*}(I)$ provide inner and outer approximations of $\LL(\V_{\R^*}(I))$. This gives the following:
\begin{cor} \label{cor:compact} For an ideal $I\subset \R[\underline{x}]$,
\begin{enumerate}
\item[{\rm (a)}] if $\Trop_{\R^*}(I) \subseteq \{0\}$, then $\V_{\R^*}(I)$ is compact,
\item[{\rm (b)}] if $\Trop_{\R {\rm ad}}(I) \not\subseteq \{0\}$, then $\V_{\R^*}(I)$ is not compact, and
\item[{\rm (c)}] if $\Trop_{\R {\rm ad}}(I)\not\subseteq (\R_{\leq0})^n$, then $\V_{\R}(I)$ is not compact. 
\end{enumerate}
\end{cor}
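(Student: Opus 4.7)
The plan is to deduce all three statements directly from the two-sided squeeze
$$\Trop_{\R {\rm ad}}(I)\;\subseteq\;\LL(\V_{\R^*}(I))\;\subseteq\;\Trop_{\R^*}(I)$$
established in Corollary \ref{trops}, translating containments of tropical cones into metric statements about $\V_{\R^*}(I)$ and $\V_{\R}(I)$. The underlying dictionary, which I would record as a preliminary remark, is that for a closed set $V\subseteq (\R^*)^n$, compactness of $V$ is equivalent to $\LL(V)\subseteq\{0\}$. Indeed, if $V$ is bounded and bounded away from every coordinate hyperplane, then $\log|V|$ is a bounded subset of $\R^n$, so $\log_{1/t}|V|\to 0$ as $t\to 0$. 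Conversely, any sequence $y(k)\in V$ with some $|y(k)_i|\to\infty$ or $|y(k)_i|\to 0$ produces, upon choosing $t(k)$ so that $\log(1/t(k))$ matches the order of magnitude $\max_j|\log|y(k)_j||$, a subsequential logarithmic limit with a nonzero coordinate.

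For (a), the hypothesis $\Trop_{\R^*}(I)\subseteq\{0\}$ together with Corollary \ref{trops} gives $\LL(\V_{\R^*}(I))\subseteq\{0\}$, and the dictionary above yields compactness of $\V_{\R^*}(I)$. For (b), suppose there exists a nonzero $w\in\Trop_{\R {\rm ad}}(I)\subseteq\LL(\V_{\R^*}(I))$. By the definition of the logarithmic limit set there are sequences $y(k)\in\V_{\R^*}(I)$ and $t(k)\in(0,1)$ with $t(k)\to 0$ and $\log_{1/t(k)}|y(k)|\to w\neq 0$, so some coordinate of $|y(k)|$ tends to $0$ or to $\infty$; either alternative obstructs compactness of $\V_{\R^*}(I)\subseteq(\R^*)^n$.

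For (c), choose $w\in\Trop_{\R {\rm ad}}(I)$ with some $w_i>0$ (guaranteed by $w\notin(\R_{\leq 0})^n$). The same containment produces sequences $y(k)\in\V_{\R^*}(I)\subseteq\V_{\R}(I)$ and $t(k)\to 0$ with $\log_{1/t(k)}|y(k)_i|\to w_i>0$. Writing $|y(k)_i|=(1/t(k))^{w_i+o(1)}$ shows $|y(k)_i|\to\infty$, so $\V_{\R}(I)$ is unbounded, hence not compact.

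There is no real obstacle here; the proof is essentially an application of Corollary~\ref{trops} together with the standard compactness-versus-logarithmic-limit dictionary. The only point that deserves care is the case split in part (c): it is the positivity (and not merely nonvanishing) of the coordinate $w_i$ that forces divergence to $\infty$ rather than convergence to $0$, which is precisely why (c) uses the cone $(\R_{\leq 0})^n$ instead of $\{0\}$.
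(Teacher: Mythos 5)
Your proposal is correct and follows essentially the same route as the paper, which deduces all three parts from the inclusions $\Trop_{\R {\rm ad}}(I)\subseteq \LL(\V_{\R^*}(I))\subseteq \Trop_{\R^*}(I)$ of Corollary~\ref{trops} together with the observation that $\V_{\R^*}(I)$ is compact if and only if $\LL(\V_{\R^*}(I))\subseteq\{0\}$. Your explicit treatment of part (c) — that a strictly positive coordinate $w_i$ forces $|y(k)_i|\to\infty$ rather than $0$, which is what passes non-compactness from $\V_{\R^*}(I)$ to $\V_{\R}(I)$ — correctly fills in a detail the paper leaves implicit.
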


This provides a method of verifying the compactness (or non-compactness) of the real variety of an ideal based only on its initial ideals in some cases. However, the example on page \pageref{dissonance} shows these conditions cannot completely characterize compactness. 

\begin{example} Let's see Corollary~\ref{cor:compact} in action: \smallskip\\
(a) Let $I=\langle (x-2)^2+(y-2)^2-1\rangle$. For every vector $w \neq \underline{0}$, the set $\V_{\R^*}(\In_w(I))$ is empty. For instance, $\In_{(0,-1)}(I) =\langle (x-2)^2 +3 \rangle$. Thus $\Trop_{\R^*}(I) = \{0\}$, confirming that $\V_{\R^*}(I)$ is compact.  \medskip\\
(b) Consider $I = \langle x^2+y^2-1\rangle$. One checks that $\Trop_{\R {\rm ad}}(I)$ is the union of the non-positive $x$ and $y$ axes, which shows that $\V_{\R^*}(I)$ is not compact (even though $\V_{\R}(I)$ is). \medskip \\
(c) For the ideal $I = \langle x^4+x^2y^2-1\rangle$, we see that $\In_{(-1,1)}(I) = \langle x^2y^2-1\rangle=\langle (xy+1)(xy-1)\rangle$ and thus $(-1,1)\in\Trop_{\R {\rm ad}}(I)$. This shows that the curve $\V_{\R}(I)$ is not compact.
\end{example}
As $\Trop_{\R^*}(I)$ and $\Trop_{\R{\rm ad}}(I)$ are imperfect approximations to $\LL(\V_{\R^*}(I))$, none of the converses of Corollary~\ref{cor:compact} hold.  Example~\ref{ex:Vbar} shows that the converse of (a) does not hold, and the example on page~\pageref{dissonance} provides a counterexample to the converses of (b) and (c).  

Lemma~\ref{R^n} provides a slightly more general condition for the non-compactness of $\V_{\R}(I)$. As discussed in Remark~\ref{rem:primDecop}, we can use $\{w\in \Trop(I): \In_w(I) \text{ has a real radical primary component}\}$ in place of 
$\Trop_{\R {\rm ad}}(I)$ in Corollary~\ref{cor:compact}.

\begin{example}\label{ex:primedec}
For any $f\in \R[x,y]$ with $\deg_{(2,1)}(f) < 10$, consider $I = \langle x^5 - x^4 y^2 + x^3 y^4 - x^2 y^6+f\rangle$.  Then
\[ \In_{(2,1)}(I) =  \langle x\rangle^2 \cap \langle x-y^2 \rangle \cap \langle x^2+y^4\rangle.\] 
As $\langle x-y^2\rangle$ is real radical, we see that there is a nonsingular point in $\V_{\R^*}(\In_{(2,1)}(I))$, for example the point $(1,1)$. Thus by Lemma~\ref{R^n}, the vector $(2,1)$ lies in $\LL(\V_{\R^*}(I))$ and $\V_{\R}(I)$ is not compact. 
\end{example}

\section*{Acknowledgements} Many thanks to Bernd Sturmfels for his useful comments and guidance, to Morgan Brown for many fruitful discussions, and to Claus Scheiderer for his suggestions and insights. The author was funded by NSF grant DMS-0757236 (FRG: Semidefinite Optimization and Convex Algebraic Geometry) and the University of California - Berkeley Mentored Research Award.


\begin{thebibliography}{BGN}\setlength{\itemsep}{-.5mm}

\bibitem[A]{log limit} D. Alessandrini, \textit{Logarithmic limit sets of real semi-algebraic sets}. arXiv:0707.0845v2 
\bibitem[BPR]{sturm} S. Basu, R. Pollack, M.F. Roy, \textit{Algorithms in real algebraic geometry}. Second edition. Algorithms and Computation in Mathematics, 10. Springer-Verlag, Berlin, 2006.
\bibitem[BGN]{binom rad} E. Becker, R. Grobe, M. Niermann, \textit{Radicals of binomial ideals. Algorithms for algebra.}  J.~Pure Appl. Algebra  117/118  (1997), 41--79.
\bibitem[BN]{comp1}E. Becker, R. Neuhaus,
\textit{Computation of real radicals of polynomial ideals.} Computational algebraic geometry, Progr. Math., 109, BirkhŠuser Boston, Boston, MA, (1993) 1--20.
\bibitem[BZ]{minors} D. Bernstein, A. Zelevinsky, \textit{Combinatorics of maximal minors.} J. Algebraic Combin. 2 (1993), no. 2, 111--121.
\bibitem[ET]{ET} M. Einsiedler, S. Tuncel. \textit{When does a polynomial ideal contain a positive
polynomial?} J.~Pure~Appl.~Algebra 164 (2001), 149Ð152.
\bibitem[E]{comm alg} D. Eisenbud, \textit{Commutative algebra. With a view toward algebraic geometry.} Graduate Texts in Mathematics, 150. Springer-Verlag, New York, 1995.
\bibitem[H]{Hart} R. Hartshorne,  \textit{Algebraic geometry.} Graduate Texts in Mathematics, No. 52. Springer-Verlag, New York-Heidelberg, 1977.
\bibitem[J]{gfan} A. Jensen, \textit{Gfan, a software system for Gr\"obner fans and tropical varieties}. Available at http://www.math.tu-berlin.de/$\sim$jensen/software/gfan/gfan.html.

\bibitem[K]{P-M} W. Kulpa, \textit{The Poincar\'e-Miranda theorem.}  Amer. Math. Monthly  104  (1997),  no. 6, 545--550. 
\bibitem[MS]{trop book} D. Maclagan, B. Sturmfels, \textit{Introduction to Tropical Geometry.} (book manuscript) http://math.berkeley.edu/$\sim$bernd/math274.html
\bibitem[M]{PPSS} M. Marshall, \textit{Positive polynomials and sums of squares.} Mathematical Surveys and Monographs, 146. American Mathematical Society, Providence, RI, 2008.
\bibitem[N]{Netzer} T. Netzer, \textit{Stability of quadratic modules.} Manuscripta Math.  129  (2009),  no. 2, 251--271.
\bibitem[Ne]{comp2} R. Neuhaus, \textit{Computation of real radicals of polynomial ideals. II.} J. Pure Appl. Algebra 124 (1998), no. 1-3, 261--280. 
\bibitem[PS]{SP} V. Powers, C. Scheiderer, \textit{The moment problem for non-compact semialgebraic sets.} Adv. Geom. 1  (2001),  no. 1, 71--88.
\bibitem[RST]{trop2}J. Richter-Gebert, B. Sturmfels, T. Theobald, \textit{First steps in tropical geometry}.  Idempotent mathematics and mathematical physics, Contemp. Math., 377 (2005), 289--317,
\bibitem[S]{Sch}C. Scheiderer, \textit{Non-existence of degree bounds for weighted sums of squares representations,} J. Complexity 21 (2005), 823-844.
\bibitem[Sch]{Schmudgen} K. Schm\"udgen, \textit{The K-moment problem for compact semi-algebraic sets.} Math Ann. 289 (1991), 203--206.
\bibitem[SW]{real trop} D. Speyer, L. Williams, \textit{The tropical totally positive Grassmannian.} J. Algebraic Combin. 22 (2005), no. 2, 189--210.
\bibitem[St]{UGB} B. Sturmfels, \textit{Gr\"obner bases and convex polytopes.} University Lecture Series, 8. American Mathematical Society, Providence, RI, (1996). 
\bibitem[V]{viro} O. Viro, \textit{ From the sixteenth Hilbert problem to tropical geometry.} Japan. J. Math. 3 (2008), no. 2, 185--214. 

\end{thebibliography}
\end{document}